\newtheorem{thm}{Theorem}[section]
\newtheorem{cor}[thm]{Corollary}
\newtheorem{lem}[thm]{Lemma}
\newtheorem{conj}[thm]{Conjecture}
\newtheorem{rmk}[thm]{Remark}
\newtheorem{lemma}[thm]{Lemma}
\newtheorem{definition}[thm]{Definition}
\newtheorem{example}[thm]{Example}
\newcommand{\zsn}{\mathbb{Z}[S_n]}
\newcommand{\csm}{c_{SM}}
\newcommand{\csmt}{c_{SM}^T}
\newcommand{\fk}{FK_n}
\newcommand{\bfi}{{\bf i}}
\newcommand{\bfj}{{\bf j}}
\newcommand{\hata}{\hat{\mathbb{A}}}
\renewenvironment{proof}{{\it Proof.}}{\qed}
\title{Chern class of Schubert cells in the flag manifold and related algebras}
\author{Seung Jin Lee}
\date{}
\begin{document}
\maketitle
\begin{abstract}
We discuss a relationship between Chern-Schwartz-MacPherson classes for Schubert cells in flag manifolds, Fomin-Kirillov algebra, and the generalized nil-Hecke algebra. We show that nonnegativity conjecture in Fomin-Kirillov algebra implies the nonnegativity of the Chern-Schwartz-MacPherson classes for Schubert cells in flag manifolds for type A. Motivated by this connection, we also prove that the (equivariant) Chern-Schwartz-MacPherson classes for Schubert cells in flag manifolds are certain summations of the structure constants of the equivariant cohomology of the Bott-Samelson varieties. We also discuss the refined positivity conjectures of the Chern-Schwartz-MacPherson classes for Schubert cells motivated by the nonnegativity conjecture in Fomin-Kirillov algebra.
\end{abstract}
\section{introduction}
The Chern classes are characteristic classes associated with vector bundles on a smooth variety. A functorial theory of Chern classes for possibly singular variety was conjectured by Grothendieck and Deligne, and established by MacPherson \cite{Mac74}. For a constructible function $\phi$ on a complex variety $X$, the theory associates a class $c_*(\phi) \in H_*(X)$ such that $c_*(\mathbbm{1}_X)= c(X) \cap [X]$ when $X$ is a smooth compact complex variety. The functoriality properties satisfied by theses classes determine them uniquely. If $X$ is a compact complex variety, then the class $c_*(\mathbbm{1}_X)$ coincides with a class defined by Schwartz \cite{Sch65a,Sch65b}, so the class is commonly called the Chern-Schwartz-MacPherson (CSM) class of $X$.\\

Aluffi and Mihalcea studied the CSM classes of Schubert cells in Grassmannians and flag varieties \cite{AM09,AM15}. For an element $w$ in the Weyl group, the CSM class of the Schubert cell $X(w)^\circ$ is $c_*(\mathbbm{1}_{X(w)^\circ})$, which we denote by $\csm(X(w)^\circ)$. In \cite{AM15}, Aluffi and Mihalcea provided a beautiful reculsive formula to compute the CSM classes of Schubert cells $X(w)^\circ$ in the flag variety by investigating the relationship between $X(w)^\circ$ and $X(ws_k)^\circ$ where $s_k$ is a simple reflection. They also conjectured that the CSM classes of Schubert cells are nonegative, i.e., when we write 
$$\csm(X(w)^\circ)= \sum_u c(w;u) [X(u)]$$ in terms of Schubert basis $[X(u)]$ in the homology of the flag variety, $c(w;u)$ is nonnegative. Huh \cite{Huh} provides a proof of the positivity of the CSM classes of Schubert cells in the Grassmannian, but his technique does not seem to apply in the flag variety. \\

In this paper, we connect the theory of CSM classes of Schubert cells in a flag variety for type A with the seemingly unrelated algebras, called the Fomin-Kirillov algebra and the generalized nil-Hecke algebra. Fomin and Kirillov \cite{FK99} studied a certain quadratic algebra, also called the Fomin-Kirillov algebra, to understand the combinatorics of the cohomology of the flag variety. There is a commutative subalgebra of the Fomin-Kirillov algebra generated by Dunkl elements, which is isomorphic to the cohomology of the flag variety. They conjectured that Schubert polynomials evaluated at Dunkl elements can be written as a nonnegeative linear combination of non-commutative monomials, which is called the nonnegativity conjecture.\\

The first main theorem in this paper is the following:

\begin{thm} \label{main1} The nonnegativity conjecture implies the nonnegativity of the CSM classes of Schubert cells in the flag variety for type A. \end{thm}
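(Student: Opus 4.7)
The plan is to lift the Aluffi-Mihalcea recursion into the Fomin-Kirillov algebra $\fk$, where the nonnegativity conjecture can be applied, and then push the result back to $H^*(G/B)$ through the Dunkl embedding. First, I would rewrite AM's recursive formula, $\csm(X(ws_k)^\circ) = \mathcal{T}_k\, \csm(X(w)^\circ)$, as an operator identity in $\fk$ by lifting each $\mathcal{T}_k$ to an operator $\tilde{\mathcal{T}}_k$ built from the simple-reflection generator $[k,k+1]$ rather than the divided difference $\partial_k$. Iterating along a reduced word for $w$ should yield a noncommutative element $\tilde{C}_w \in \fk$ which, by construction, is an explicit nonnegative integer combination of noncommutative monomials in the generators $[ij]$ and whose image under the Dunkl embedding into $H^*(G/B)$ equals $\csm(X(w)^\circ)$.

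With $\tilde{C}_w$ in hand, the positivity argument would proceed by comparing its noncommutative expansion with the noncommutative expansions of Schubert polynomials evaluated at Dunkl elements. The nonnegativity conjecture supplies expansions $\mathfrak{S}_u(\theta_1,\ldots,\theta_n) = \sum_M d_{u,M} M$ with $d_{u,M} \geq 0$. The goal is to match the monomial expansion of $\tilde{C}_w$ against these, producing a distinguished expression $\tilde{C}_w = \sum_u a(w,u)\, \mathfrak{S}_u(\theta_1,\ldots,\theta_n)$ with $a(w,u) \geq 0$; applying the Dunkl embedding then gives $\csm(X(w)^\circ) = \sum_u a(w,u)[X(u)]$, so that $c(w;u) = a(w,u) \geq 0$. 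A natural framework for organizing this matching is the generalized nil-Hecke algebra mentioned in the abstract, which mediates between noncommutative monomials in $\fk$ and the Schubert-basis coefficients of commutative evaluations.

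The main obstacle is the construction of $\tilde{\mathcal{T}}_k$ with all three required properties simultaneously: it must preserve nonnegative noncommutative expansions, descend modulo the quadratic and braid-type FK relations to the Aluffi-Mihalcea operator on $H^*(G/B)$, and interact with the Schubert expansion so that the extraction of $a(w,u)$ is forced to be nonnegative. The second compatibility, verifying that the FK relations reproduce the AM recursion after Dunkl evaluation, is a nontrivial algebraic check; the third is precisely where the nonnegativity conjecture is indispensable, since without it a nonnegative noncommutative monomial expansion of $\tilde{C}_w$ is still consistent with negative Schubert coefficients. Once both issues are resolved, Theorem~\ref{main1} follows by induction on $\ell(w)$, with the inductive step provided by $\tilde{\mathcal{T}}_k$ and the base case $\csm(X(e)^\circ) = [X(e)]$ being trivially positive.
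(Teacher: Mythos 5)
Your proposal takes a genuinely different route from the paper's, and there is a gap at its core.

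The paper never constructs a noncommutative lift $\tilde{C}_w \in \fk$ of the CSM class. Instead, it introduces the \emph{extended Bruhat action} $*$ of $\fk$ on $\mathbb{Z}[S_n]$, defined by $w*[ij]=wt_{ij}$ if $\ell(wt_{ij})<\ell(w)$ and $0$ otherwise (the degree-$(-1)$ part of which is the usual Bruhat action). The key identity, Theorem~\ref{main}, is $c(w;v)=\psi(w*\mathfrak{S}_v(\theta))$, where $\psi:\mathbb{Z}[S_n]\to\mathbb{Z}$ sends every group element to $1$. Once this identity is established, nonnegativity follows immediately: the nonnegativity conjecture gives $\mathfrak{S}_v(\theta)=\sum_M d_{v,M}M$ with $d_{v,M}\geq 0$, and for each monomial $M=[i_1j_1]\cdots[i_mj_m]$ the element $w*M$ is either $0$ or a single permutation, so $\psi(w*M)\in\{0,1\}$; therefore $c(w;v)=\sum_M d_{v,M}\,\psi(w*M)\geq 0$. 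Note that the positivity comes \emph{entirely} from evaluating a single permutation $w$ on a nonnegative sum of monomials, not from anything happening inside the commutative Dunkl subalgebra.

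The gap in your approach is in the matching step. You want to write $\tilde{C}_w=\sum_u a(w,u)\,\mathfrak{S}_u(\theta)$ with $a(w,u)\geq 0$, and you claim the nonnegativity conjecture forces this. It does not. Having a nonnegative monomial expansion of $\tilde{C}_w$ and nonnegative monomial expansions of each $\mathfrak{S}_u(\theta)$ does not imply the change-of-basis coefficients $a(w,u)$ are nonnegative: an element with a nonnegative expansion can easily have negative coordinates when rewritten in terms of another family of elements, each with nonnegative expansions. (This is why, for instance, the ordinary structure constants $p^w_{u,v}$ need a separate geometric positivity argument even though Schubert polynomials are monomial-positive.) The proposal acknowledges the issue but offers no mechanism that closes it, and there is no such mechanism available by purely matching expansions. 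What you actually need is a way to \emph{pair} the hypothetical $\tilde{C}_w$ against $w$ so that a nonnegative monomial expansion passes directly to a nonnegative number — and that is exactly the role of the extended Bruhat action combined with the augmentation $\psi$. Without that (or an equivalent device), the induction on $\ell(w)$ you outline does not produce the desired conclusion. There is also a secondary confusion: you speak of the ``image under the Dunkl embedding into $H^*(G/B)$,'' but the Dunkl map goes from $H^*(G/B)$ into $\fk$; the CSM class lives in $H_*(G/B)$, so the translation between the two requires the identification via the nil-Hecke module structure (Lemma~\ref{translate}), which your proposal does not address.
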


 It turns out that the nonnegativity conjecture for the Fomin-Kirillov algebra is much stronger than the positivity conjecture for CSM classes conjectured by Aluffi and Mihalcea for type A. Motivated by this fact, we investigate the relation between the equivariant CSM classes of Schubert cells in the flag variety of all types and the structure constants of the equivariant cohomology of the Bott-Salemson varieties. Ohmoto proves \cite{Ohm} that there is an equivariant version of $c_*$, denoted by $c^T_*$ such that for a constructible set $U$ in a complex variety $X$, $c^T_*(\mathbbm{1}_U)$ is an element in the equivariant homology $H^T_*(X)$ of $X$ that satisfies $c^T_*(\mathbbm{1}_Y)= c^T(T_Y) \cap [Y]_T$ if $Y$ is a projective, non-singular variety. Denote the CSM class of the Schubert cell $c^T_*(\mathbbm{1}_{X(w)^\circ})$ by $\csmt(X(w)^\circ)$. Aluffi and Mihalcea \cite{AM15} shows that the their reculsive formula for $\csmt(X(w)^\circ)$ is essentially the same as those for the ordinary CSM classes (See Section 2 for details). \\
 
Allufi and Mihalcea \cite{AM15} also conjectured that $\csmt(X(w)^\circ)$ also possess positivity. More precisely, when we write
$$\csmt(X(w)^\circ)=\sum_u c_T(w;u) [X(u)]_T,$$
 they conjectured that $c_T(w;v)$ is a nonnegative polynomial in simple roots. The second main theorem is the following.
 
\begin{thm}\label{main2} For elements $w,v$ in the Weyl group, $c_T(w;v)$ is a certain sum of the structure constants of the equivariant Bott-Samelson variety (See Section 6 for the precise statement.)
\end{thm}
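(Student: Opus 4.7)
The plan is to iterate the Aluffi--Mihalcea recursion along a reduced word of $w$ and interpret the resulting expression geometrically as a pushforward from a Bott--Samelson resolution, whose natural basis expansion then exhibits $c_T(w;v)$ as a sum of structure constants.

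Fix a reduced expression $\bfi=(i_1,\ldots,i_\ell)$ of $w$ and let $\mu:Z_{\bfi}\to X(w)$ denote the associated Bott--Samelson resolution. Iterating the equivariant AM recursion recalled in Section~2, one obtains
$$\csmt(X(w)^\circ)=\mathcal{T}_{i_\ell}\cdots\mathcal{T}_{i_1}([X(e)]_T),$$
where each $\mathcal{T}_k$ is built from the BGG/Demazure operator $\partial_k$ together with multiplication by polynomials in the simple roots. The key geometric step is to lift each $\mathcal{T}_k$ to the Bott--Samelson tower: using the $\mathbb{P}^1$-bundle factorization $Z_{\bfi}\to Z_{\bfi'}$ that forgets the last letter, the operator $\mathcal{T}_k$ corresponds to pull-back, cap product with a specific divisor class, and pushforward. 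Composing these lifts, I would prove an identity of the form $\csmt(X(w)^\circ)=\mu_{\ast}(\Theta_\bfi)$ for an explicit class $\Theta_\bfi\in H^{\ast}_T(Z_{\bfi})$ realized as a product of natural divisor classes on $Z_\bfi$, one for each entry in the reduced word.

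Next, expand $\Theta_\bfi$ in the equivariant Schubert-type basis $\{[Z_J]_T\}_{J\subset[\ell]}$ of $H^{\ast}_T(Z_\bfi)$. Since $\Theta_\bfi$ is by construction a product of basis elements, the coefficients in this expansion are, by definition, iterated structure constants of the equivariant cohomology of $Z_\bfi$. Finally, apply the pushforward: $\mu_{\ast}[Z_J]_T=[X(v_J)]_T$ when the restriction $\mu|_{Z_J}$ is birational onto its image and vanishes otherwise, where $v_J$ denotes the Demazure product of the subword indexed by $J$. Extracting the coefficient of $[X(v)]_T$ then yields the desired expression for $c_T(w;v)$ as a sum of Bott--Samelson structure constants indexed by those subsets $J$ with $v_J=v$.

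The main obstacle lies in the geometric realization of $\mathcal{T}_k$: identifying the correct divisor class whose multiply-and-pushforward exactly matches the AM operator, and verifying that these lifts compose coherently across successive stages of the Bott--Samelson tower, so that the resulting $\Theta_\bfi$ is truly a product of basis classes rather than an implicit combination of them. Once this lift is established, the remaining steps are formal manipulations using the standard Schubert-type basis of $H^{\ast}_T(Z_\bfi)$ and the explicit pushforward formula for $\mu$; any dependence of the intermediate expression on the choice of reduced word $\bfi$ is absorbed on the Bott--Samelson side without affecting the final sum of structure constants announced in the theorem.
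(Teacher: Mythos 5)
Your proposal takes a genuinely different route from the paper. The paper works entirely algebraically: Lemma~\ref{translate} translates $T_k$ into right multiplication by $x_k+s_k$ in the nil-Hecke algebra, Theorem~\ref{cwv} expresses $\sum_v c_T(w;v) x_v$ as the product $\prod_j (s_{i_j}+x_{i_j})$, and then the crucial trick is to transfer this product into the \emph{generalized} nil-Hecke algebra $\hata$, where the map $pr:\hata\otimes_S\hata\to\hata$ is an algebra homomorphism (it is not one on $\mathbb{A}$, as the remark at the end of Section~6 stresses). This identifies $\prod_j (s_{i_j}+x_{i_j})$ with $pr\circ\Delta(x_\bfi)$, whose expansion produces exactly the \emph{two-fold} coproduct coefficients $p^{\bfi}_{\bfi',\bfi''}$; Theorem~\ref{relative} of Berenstein--Richmond then converts these into the pairwise Bott--Samelson constants $p^{\bfi,[m]}_{K,K'}$, giving the clean statement of Theorem~\ref{ct}. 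You instead propose to lift everything to the Bott--Samelson variety geometrically, writing $\csmt(X(w)^\circ)=\mu_*\Theta_\bfi$ and extracting coefficients after pushforward.

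The geometric identity itself is plausible: with $U$ the open dense cell in $Z_\bfi$ and $D$ its normal-crossings complement, $\mu_*(\1_U)=\1_{X(w)^\circ}$, so by functoriality $\csmt(X(w)^\circ)=\mu_*\bigl(c^T(T_{Z_\bfi}(-\log D))\cap[Z_\bfi]_T\bigr)$, and the log tangent bundle of a Bott--Samelson tower splits as a sum of line bundles, so the class is indeed a product. However, two points in your proposal need correction. First, each factor is $(1+d_j)$ where $d_j$ is a divisor, so $\Theta_\bfi$ is \emph{not} a product of basis elements $\sigma^T_K$; expanding $\prod_j(1+d_j)$ in the $\sigma^T_K$-basis produces \emph{iterated} ($m$-fold nested) structure constants, i.e.\ sums of products, not the single pairwise constants $p^{\bfi,[m]}_{K,K'}$ that Theorem~\ref{ct} gives. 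You would still obtain ``a sum involving Bott--Samelson structure constants'' in a weak sense, but not the paper's sharper and simpler formula. Second, the obstacle you flag (``identifying the correct divisor class whose multiply-and-pushforward exactly matches the AM operator'') is not merely technical: it is precisely the content of Lemma~\ref{translate} and Theorem~\ref{cwv}, and the paper bypasses the geometric lift entirely by passing to the algebra $\hata$, where the coproduct collapses the $m$-fold nesting to a two-fold one. So while your geometric scaffolding is sound in outline, it does not reproduce the paper's result without the extra algebraic observation that the product $\prod_j(s_{i_j}+x_{i_j})$ is the image under $pr$ of a \emph{single} coproduct $\Delta(x_\bfi)$ — an observation that lives naturally in the generalized nil-Hecke algebra and has no obvious analogue in the purely geometric Bott--Samelson picture you sketch.
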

This result together with Theorem \ref{main1} suggests interesting conjectures about the positivity of refined sum of the constants of the equivariant Bott-Samelson variety strictly stronger than Aluffi and Mihalcea's conjectures. One of refined conjectures is the following.
 
 \begin{conj}\label{refined} For Weyl group elements $w,u,v$ with $\ell(w)=\ell(u)+\ell(v)$, let $p^w_{u,v}$ be the Littlewood-Richardson coefficients, the structure constants of the cohomology of the flag variety. Then 
 $$c(w;v) \geq \sum_{\substack{u\\ \ell(w)=\ell(u)+\ell(v)}} p^w_{u,v}.$$
 \end{conj}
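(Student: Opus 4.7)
The plan is to deduce Conjecture \ref{refined} from Theorem \ref{main2} by recognizing $\sum_u p^w_{u,v}$ as a distinguished sub-sum of the Bott-Samelson structure constants that already compute $c(w;v)$. Specialized to the non-equivariant setting, Theorem \ref{main2} writes $c(w;v)$ as a sum of non-negative integer structure constants of $H^*(BS_{\mathbf{i}})$ for a Bott-Samelson variety $BS_{\mathbf{i}}$ attached to a reduced word $\mathbf{i}$ of $w$. Since each summand is non-negative, the inequality will follow once $\sum_u p^w_{u,v}$ is matched with a sub-collection of these summands.

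First I would fix a reduced word $\mathbf{i} = (i_1, \ldots, i_m)$ for $w$ and consider the birational resolution $\pi: BS_{\mathbf{i}} \to X(w)$. The cohomology $H^*(BS_{\mathbf{i}})$ has a natural basis indexed by subwords of $\mathbf{i}$; for each $u \leq w$ I would single out a canonical reduced subword (for example the leftmost greedy one) producing a class $[Z_u] \in H^*(BS_{\mathbf{i}})$ with $\pi_*[Z_u] = [X(u)]$, and similarly for $v$. By the projection formula together with the birationality of $\pi$, the LR coefficient $p^w_{u,v}$ is computed by a specific intersection number on $BS_{\mathbf{i}}$, namely a distinguished structure constant of $H^*(BS_{\mathbf{i}})$. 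Summing over $u$ with $\ell(u)+\ell(v)=\ell(w)$ then realizes the right-hand side of the conjecture as a sum of non-negative Bott-Samelson structure constants.

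The crux is to show that this specific sub-sum is contained, term by term, in the full sum assigned to $c(w;v)$ by Theorem \ref{main2}. This is where I expect the main obstacle to lie: both expansions depend a priori on choices (the reduced word $\mathbf{i}$, the canonical subword representing each $u$), yet the final inequality is choice-independent, so a hidden combinatorial symmetry must be exploited and any potential double counting across the different $u$'s must be ruled out. A promising route is to go through the Fomin-Kirillov algebra via Theorem \ref{main1} and translate both sides into coefficients of explicit non-commutative monomials, where the containment may become manifest via a direct comparison of monomials; otherwise one would need to strengthen Theorem \ref{main2} to track the combinatorial type of each structure constant so that the LR sub-sum visibly splits off as a sub-collection of terms rather than only matching after cancellation.
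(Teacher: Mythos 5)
The statement you are trying to prove is labeled as a \emph{conjecture} in the paper and remains unproven; the paper's only argument for it (in Section 4, immediately after Theorem \ref{main}) is conditional, showing that Conjecture \ref{refined} in type $A$ \emph{follows from} Conjecture \ref{posfk} (the Fomin--Kirillov nonnegativity conjecture), via $c(w;v)=\psi(w*\mathfrak{S}_v) \geq \psi(w\cdot\mathfrak{S}_v)=\sum_u p^w_{u,v}$. So any purportedly unconditional proof should be viewed with suspicion from the outset.

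Your proposal has a concrete factual error that breaks the main line of reasoning. You write that Theorem \ref{main2} expresses $c(w;v)$ as a sum of \emph{non-negative} integer structure constants of $H^*(\Gamma_{\mathbf{i}})$. This is false, and the paper says so explicitly: ``The structure constants of the ordinary cohomology of the Bott--Samelson varieties are $p^{\bfi,K}_{K',K''}$ with $|K|=|K'|+|K''|$ and these constants are integers, but not necessarily nonnegative integers.'' Bott--Samelson varieties are smooth but not homogeneous, so the transversality arguments that give Schubert-calculus positivity do not apply. Consequently the strategy of isolating $\sum_u p^w_{u,v}$ as a term-by-term sub-sum of the Bott--Samelson expansion and concluding the inequality from ``the remaining terms are nonnegative'' fails at the first step: the remaining terms may be negative. (A secondary inaccuracy: by Theorem \ref{mu}(2), each $p^w_{u,v}$ is itself already a \emph{sum} over many pairs $K',K''$ of Bott--Samelson coefficients, not a single distinguished one, so the intended sub-collection is also not as clean as you describe.)

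Your fallback route through the Fomin--Kirillov algebra is indeed the one the paper takes, but it does not give an unconditional proof either. Theorem \ref{main} gives $c(w;v)=\psi(w*\mathfrak{S}_v)$ and Theorem \ref{lr} gives $\sum_u p^w_{u,v}=\psi(w\cdot\mathfrak{S}_v)$, and $w\cdot\mathfrak{S}_v$ is the lowest-degree part of $w*\mathfrak{S}_v$. The comparison $\psi(w*\mathfrak{S}_v)\geq\psi(w\cdot\mathfrak{S}_v)$ requires that $\mathfrak{S}_v(\theta)$ admits a nonnegative expansion into noncommutative monomials $[i_1j_1]\cdots[i_kj_k]$ --- exactly Conjecture \ref{posfk} --- because $\psi(w*M)\geq\psi(w\cdot M)\geq 0$ for each monomial $M$, and nonnegative coefficients $c_M$ are needed to sum these termwise inequalities. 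Without Conjecture \ref{posfk}, there is nothing to guarantee the desired dominance. This is precisely the missing ingredient, and it is why the paper leaves Conjecture \ref{refined} open.
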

 
We use the generalized nil-Hecke algebras studied by Berenstein and Richmond \cite{BR15} for studying the equivariant structure constants of the cohomology of the Bott-Samelson varieties and relate the generalized nil-Hecke algebra with the Fomin-Kirillov algebra and the operator $T_k$ studied by Aluffi and Mihalcea (See Section 2 for the definition of $T_k$).\\

 The structure of the paper is as follows: In Section 2, we discuss Allufi and Mihalcea's reculsive formula about the CSM class of the Schubert cells in the flag variety. In Section 3, we introduce the Fomin-Kirillov algebra and describe the nonnegativity conjecture. In Section 4, we state Theorem \ref{main1} precisely without the proof and discuss its implications. In Section 5, we define the generalized nil-Hecke algebra and connect the algebra with the structure constants of the equivariant cohomology of the Bott-Samelson variety. In Section 6, we interpret $T_k$ in terms of the generalized nil-Hecke algebra and show Theorem \ref{main2}. In Section 7, we prove Theorem \ref{main1} by relating Section 6 with the Fomin-Kirillov algebra. In Section 8, we discuss refined positivity conjectures and other concluding remarks.
  
\section{CSM classes for Schubert cells in flag varieties}
 In this section, we give a brief introduction to the CSM class and the summary of results in \cite{AM15} by Allufi and Mihalcea.\\

Let $G$ be a complex simple Lie group, $B$ be a Borel subgroup, and $T$ a maximal torus in $B$. Let $\mathfrak{g},\mathfrak{h}$ be the Lie algebras of $G$ and $T$, and let $R\subset \mathfrak{h}^*$ be the associated root system with the set of positive roots $R^+$ determined by $B$. Denote by $\{\alpha_1,\ldots,\alpha_r\} \subset R^+$ the set of simple roots. Let $R^\vee$ denote the set of coroots $\alpha^\vee \in \mathfrak{h}$ and $\langle \cdot,\cdot\rangle : R\otimes R^\vee \rightarrow \mathbb{Z}$ the pairing.\\

Let $W$ be the Weyl group of $G$ generated by simple reflections $s_1,\ldots,s_r$ corresponding respectively to the simple roots $\alpha_1,\ldots,\alpha_r$ of $G$. Let $X:=G/B$ be the flag variety and $X(w)^\circ:=BwB/B$ the Schbuert cell for $w$ in the flag variety $X$. Each cell $X(w)^\circ$ is isomorphic to $\mathbb{C}^{\ell(w)}$. Let $X(w):= \overline{BwB/B}$ be the Schubert variety for $w$. Each Schubert variety $X(w)$ has a fundamental class $[X(w)] \in H_*(X,\mathbb{Q})$, and these classes form a basis for the (co)homology of the flag variety. It may be verified that $X(v) \subset X(w)$ if and only if $v\leq w$ in the Bruhat order (See \cite{Ful97} for instance). For type $A$ case, we have $G=SL(n)$ and $B$ the set of upper triangular matrices in $G$. The Weyl group $W$ is the symmetric group $S_n$ and the for elements $w,v \in S_n$, the Bruhat order $v \leq w$ is defined by $v=wt_{ij}$ for some reflection $t_{ij} \in S_n$ with $\ell(v)=\ell(w)-1$.\\

Let $\partial_k$ be the BGG operator \cite{BGG73} from $H_*(X) \rightarrow H_{*+2}(X)$ for each $1\leq k \leq r$, which sends the Schubert class $[X(w)]$ to $[X(ws_k)]$ if $\ell(ws_k)>\ell(w)$ and to 0 otherwise. The Weyl group admits a right action on $H_*(X)$, and one can write the action of $s_k$ on $H_*(X)$ in terms of $\partial_k$ and the first Chern class of the line bundle $\mathcal L_{\alpha_k}$ indexed by $\alpha_k$. More precisely, we have $s_k=id-c_1(\mathcal L_{\alpha_k})\partial_k$ where the action of the first Chern class of line bundle $c_1(\mathcal L_\lambda)$ indexed by an integral weight $\lambda$ on $H_*(X)$ is given by the Chevalley formula:
$$c_1(\mathcal L_{\lambda})\cdot [X(w)]= \sum \langle \lambda, \beta^\vee\rangle [X(ws_\beta)]$$
where the sum is over all positive roots $\beta$ such that $\ell(ws_\beta)=\ell(w)-1$. See e.g. \cite{Ful97} for more details. \\

For $1\leq k \leq r$, define the non-homogeneous operator
$$T_k:=\partial_k-s_k : H_*(X)\rightarrow H_*(X).$$

The CSM class of the Schubert cell $X(w)^\circ$ is defined by $c_*(\mathbbm{1}_{X(w)^\circ}) \in H_*(G/B)$, and we denote by $\csm(X(w)^\circ)$. Although one needs to describe functorial properties of $c_*$ \cite{Mac74} to define the CSM class of the Schubert cells, Aluffi and Mihalcea \cite{AM15} provided the following recursive formula for $\csm(X(w)^\circ)$ which is purely combinatorial.

\begin{thm}  \label{rec} \cite{AM15}
 Let $w \in W$ and $1\leq k \leq r$. Then
$$ T_k(\csm(X(w)^\circ))= \csm(X(ws_k)^\circ)).$$
\end{thm}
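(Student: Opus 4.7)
The plan is to leverage the functoriality of MacPherson's transformation $c_*$ with respect to the smooth $\mathbb{P}^1$-bundle projection $\pi_k : G/B \to G/P_k$ to the partial flag variety associated with the parabolic $P_k = B \cup Bs_k B$. The geometric observation that drives the proof, and which works uniformly in both the $\ell(ws_k) > \ell(w)$ and $\ell(ws_k) < \ell(w)$ cases, is that the open set $U_w := \pi_k^{-1}(\pi_k(X(w)^\circ))$ decomposes as $U_w = X(w)^\circ \sqcup X(ws_k)^\circ$ and is a Zariski-locally trivial $\mathbb{P}^1$-bundle over $B_w := \pi_k(X(w)^\circ) = \pi_k(X(ws_k)^\circ)$. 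Moreover, each fiber of $\pi_k|_{X(w)^\circ}$ is either a single point or an affine line, both of Euler characteristic one, so the proper pushforward of constructible functions yields $(\pi_k)_* \1_{X(w)^\circ} = \1_{B_w}$.

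First I would verify the geometric realization $\partial_k = \pi_k^!(\pi_k)_*$ of the BGG operator on $H_*(G/B)$, where $\pi_k^!$ is the smooth Gysin pullback; this is a routine check on the Schubert basis using that $\pi_k$ restricted to $X(v)$ is either birational onto its image or a $\mathbb{P}^1$-bundle. Combined with MacPherson's functoriality for proper pushforward, this yields $\partial_k \csm(X(w)^\circ) = \pi_k^! c_*(\1_{B_w})$. Next I would invoke the Verdier-Riemann-Roch property of $c_*$ under the smooth morphism $\pi_k|_{U_w} : U_w \to B_w$, namely
\[
c_*(\1_{U_w}) = c(T_{\pi_k}) \cap \pi_k^! c_*(\1_{B_w}).
\]
The left side equals $\csm(X(w)^\circ) + \csm(X(ws_k)^\circ)$ by additivity of $c_*$, and, identifying the relative tangent line bundle $T_{\pi_k}$ with $\mathcal L_{\alpha_k}$, the right side becomes $(1 + c_1(\mathcal L_{\alpha_k})) \cap \partial_k \csm(X(w)^\circ)$. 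Substituting the identity $s_k = \mathrm{id} - c_1(\mathcal L_{\alpha_k}) \partial_k$ recalled in the excerpt, the equation rearranges to
\[
\csm(X(ws_k)^\circ) = \partial_k \csm(X(w)^\circ) - s_k \csm(X(w)^\circ) = T_k \csm(X(w)^\circ),
\]
which is the desired conclusion.

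The main obstacle is invoking Verdier-Riemann-Roch rigorously in the non-proper setting, since $U_w$ and $B_w$ are locally closed rather than closed, so the CSM formalism must be understood as taking values in Borel-Moore homology with the smooth-pullback property holding for constructible functions over non-compact bases. A secondary technical point is fixing conventions so that $T_{\pi_k} \cong \mathcal L_{\alpha_k}$ holds on the nose; this is a direct Lie-algebra computation at the identity coset $eB \in G/B$, where the fiber $P_k/B \cong \mathbb{P}^1$ has tangent direction spanned by a root space for $\pm \alpha_k$, and the sign is pinned down by the convention used in the Chevalley formula quoted earlier.
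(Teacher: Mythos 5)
Your proposal is correct and follows essentially the same route as Aluffi and Mihalcea in \cite{AM15}: the $\mathbb{P}^1$-bundle $\pi_k\colon G/B\to G/P_k$, the decomposition $\pi_k^{-1}(\pi_k(X(w)^\circ))=X(w)^\circ\sqcup X(ws_k)^\circ$, proper pushforward functoriality giving $(\pi_k)_*\1_{X(w)^\circ}=\1_{B_w}$, and Verdier--Riemann--Roch for the smooth morphism $\pi_k$, combined with $\partial_k=\pi_k^!(\pi_k)_*$ and $s_k=\mathrm{id}-c_1(\mathcal L_{\alpha_k})\partial_k$. The concern you flag about locally closed domains is handled more cleanly by applying VRR to the ambient proper smooth map $\pi_k\colon G/B\to G/P_k$ with the constructible function $\1_{B_w}$ on $G/P_k$ (whose pullback is $\1_{U_w}$), which is precisely what \cite{AM15} does.
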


For $w \in W$, one can write $\csm(X(w)^\circ)=\sum_{v \in W} c(w;v) [X(v)]$. It is not hard to show from Theorem \ref{rec} that $c(w;v)=0$ unless $v \leq w$. Aluffi and Mihalcea proposed the following positivity conjecture.

\begin{conj}\cite[Conjecture 1]{AM15}\label{poscsm}
For $v\leq w$, we have $c(w;v)>0$.
\end{conj}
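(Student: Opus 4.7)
The most natural starting point is the recursion of Theorem~\ref{rec}: for any reduced expression $w=s_{i_1}\cdots s_{i_\ell}$ one has $\csm(X(w)^\circ)=T_{i_\ell}\cdots T_{i_1}[X(\mathrm{id})]$, so the first step is to unpack what $T_k=\partial_k-s_k$ does to a single Schubert class. Using $s_k=\mathrm{id}-c_1(\mathcal L_{\alpha_k})\partial_k$ together with the Chevalley formula, a short computation gives
\[
T_k[X(v)]=\begin{cases}[X(vs_k)]+[X(v)]+\displaystyle\sum_{\beta}\langle\alpha_k,\beta^\vee\rangle[X(vs_ks_\beta)] & \text{if }\ell(vs_k)>\ell(v),\\[4pt]-[X(v)] & \text{if }\ell(vs_k)<\ell(v),\end{cases}
\]
where in the first case the sum runs over positive roots $\beta\neq\alpha_k$ with $\ell(vs_ks_\beta)=\ell(vs_k)-1$ and every coefficient displayed is manifestly nonnegative (the $[X(v)]$ coefficient picks up $-1+\langle\alpha_k,\alpha_k^\vee\rangle=+1$). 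The second case, however, contributes genuinely negative terms, so a direct termwise induction on $\ell(w)$ cannot establish the conjecture as stated.

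The plan is therefore to expand the iterated composition $T_{i_\ell}\cdots T_{i_1}[X(\mathrm{id})]$ as a sum indexed by combinatorial data attached to the chosen reduced word --- subwords, or decorated paths in the Bruhat graph in the Bott--Samelson spirit --- and reorganise so that the negative contributions of the second branch are absorbed by later applications of the $T_j$. Given the hint offered by Theorem~\ref{main2}, the right packaging should realise $c(w;v)$ as a manifestly nonnegative sum of structure constants of the equivariant cohomology of a Bott--Samelson resolution of $X(w)$; one would then verify that each such summand is nonnegative and that at least one of them is strictly positive for every $v\leq w$, using that $X(v)\subset X(w)$ can always be reached along some chain in the Bruhat order.

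The main obstacle is precisely this cancellation. Any fixed $v\leq w$ is visited by many Bruhat-graph paths associated with the reduced word, with contributions of both signs, and no reduced expression avoids the bad branch once $\ell(w)$ is large; worse, the negative terms are of the form $-[X(v)]$ for arbitrary intermediate $v$, so one cannot simply choose a favorable reduced word. To control the mess I would lift the entire computation into a richer algebra where $\partial_k$, $c_1(\mathcal L_{\alpha_k})$, and the Weyl group elements can be manipulated on an equal footing, pushing the minus signs into globally resolvable commutation relations; the Fomin--Kirillov algebra and the generalised nil-Hecke algebra of Berenstein--Richmond named in the introduction are the natural candidates. I expect that such a strategy will not yield positivity unconditionally but rather reduce it to a cleaner positivity statement in the ambient algebra, which is indeed precisely the form taken by Theorem~\ref{main1}.
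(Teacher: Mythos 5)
You have correctly recognized that Conjecture~\ref{poscsm} is, in this paper, an open conjecture rather than a theorem: the paper contains no proof of it, and its main result (Theorem~\ref{main1}, via Theorem~\ref{main}) is exactly the conditional reduction you anticipate, namely that the Fomin--Kirillov nonnegativity conjecture implies $c(w;v)>0$ through the identity $c(w;v)=\psi(w*\mathfrak{S}_v)\geq\psi(w\cdot\mathfrak{S}_v)=\sum_u p^w_{u,v}$. Your diagnosis of why a naive induction on $\ell(w)$ via Theorem~\ref{rec} fails (the $-[X(v)]$ branch when $\ell(vs_k)<\ell(v)$) and your suggested repackaging through Bott--Samelson structure constants and the Fomin--Kirillov/nil-Hecke algebras are precisely the paper's route in Sections 5--7, so the proposal is aligned with the paper's strategy and appropriately stops short of claiming an unconditional proof.

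One technical slip worth flagging: in your case-$\ell(vs_k)>\ell(v)$ expansion of $T_k[X(v)]$, you assert that every displayed coefficient is manifestly nonnegative, but the Chevalley coefficients $\langle\alpha_k,\beta^\vee\rangle$ for positive roots $\beta\neq\alpha_k$ are \emph{not} nonnegative in general (e.g.\ $\langle\alpha_1,\alpha_2^\vee\rangle=-1$ in type $A_2$), so even the first branch can contribute negative terms. This does not change your conclusion --- if anything it reinforces the point that the recursion alone cannot yield positivity --- but the parenthetical justification only handles the $\beta=\alpha_k$ term and should not be read as covering the whole sum.
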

Note that the $T_k$ action does not preserve the positivity. \\

Let $P\in G$ be a parabolic subgroup containing $B$, and let  $p: G/B \rightarrow G/P$ be the natural projection. Then $p(X(w))=X(w W_p)$, where $W_P$ is the subgroup of $W$ generated by the reflections in $P$. The functoriality of CSM classes can be used to show that
$$p_*(\csm(X(w)^\circ))= \csm (X(wW_p)^\circ).$$
Therefore, Theorem \ref{poscsm} implies the nonnegativity of CSM classes of any $G/P$. When $G/B$ is a Grassmannian manifold, positivity of special cases of CSM classes in $G/P$ are proved by several authors \cite{AM09,Mih15,Jon10,Str11}, and it was settled for all cases by Huh \cite{Huh}.\\

Aluffi and Leonardo also shows that Theorem \ref{rec} works in the equivariant setting. Let $T\subset B$ be the maximal torus in $B$. For a variety $X$ with a $T$-action, the equivariant cohomology $H^*_T(X)$ is the ordinary cohomology of $(ET \times X)/T$, where $ET$ is the universal $T$-bundle. The equivariant cohomology is an algebra over $H^*_T(pt)$, a polynomial ring in generators for the weight lattice of $T$. Since $G/B$ is smooth, we identify the equivariant homology $H^T_*(G/B)$ with the equivariant cohomology $H^*_T(G/B)$. For a subvariety $Y\subset X$ invariant under the $T$ action, one can associate its equivariant fundamental class $[Y]_T \in H^T_*(X)$. See e.g. \cite{Ful07} for basics of the equivariant cohomology. \\

Ohmoto \cite{Ohm} shows that one can define the functor $c_*^T$ from the group of equivariant constructible functions to the equivariant homology $H_*^T(X)$ satisfying certain functorial properties and $c^T_*(\mathbbm{1}_Y)= c^T(T_Y) \cap [Y]_T$ if $Y$ is a projective, non-singular variety. The image of $c^T_*(\mathbbm{1}_Y)$ via the map $H_*^T(X)\rightarrow H_*(X)$ is the CSM class $c_*(\mathbbm{1}_Y)$. Let $i$ be the map $H_*^T(G/B)\rightarrow H_*(G/B)$ induced by the projection $H^T_*(pt) \rightarrow H_*(pt)\cong\mathbb{Q}$ sending all generators in weight lattice to 0. Then we have $i(\csmt(X(w)^\circ))=\csm(X(w)^\circ)$ and $i(c_T(w;v))=c(w;v)$.

\section{Fomin-Kirillov algebra}

Fomin and Kirillov defined a certain quadratic algebra $\fk$, also called the Fomin-Kirillov algebra, to better understand the combinatorics of the cohomology ring of the flag variety. They show that the commutative subalgebra generated by Dunkl elements of degree 1 is isomorphic to the cohomology of the flag variety. Since then, a lot of variations for the quadratic algebra has been studied \cite{Kir15,KM03,KM04,KM05,KM10,KM12}. For example, there are generalizations of the Fomin-Kirillov algebra for K-theory, quantum, equivariant cohomology and for other finite types.\\

We review some facts on the Fomin-Kirillov algebra mostly proved in \cite{FK99}.
\begin{definition}
For a fixed positive integer $n$, let $\fk$ be the the free algebra generated by $\{[ij]: i,j\in \mathbb{Z},1\leq i<j \leq n\}$ with the following relations:
\begin{align*}
[ij]^2&=0.\\
[ij][kl]&=[kl][ij] \text{ for distinct } i, j, k ,l. \\
[ij][jk]&=[jk][ik]+[ik][ij] \text{ and } [jk][ij]=[ik][jk]+[ik][ij] \text{ for distinct } i, j, k.
\end{align*}
\end{definition}
There are two representations of $\fk$ on $\zsn$. Define the \emph{Bruhat action} of $[ij]$ on $\zsn$ by
\begin{equation}\label{bruhat}
w\cdot[ij] =
\begin{cases}
wt_{i,j} & \text{if}\quad \ell(wt_{i,j})=\ell(w)-1 \\
0 & \text{otherwise.}
\end{cases}
\end{equation}
where the $\ell(w)$ is the lenght of $w$ in $S_n$. The Bruhat action is dual to the action described in \cite{FK99}. Note that Fomin and Kirillov \cite{FK99} defined the action such that $w[ij]=wt_{ij}$ if $\ell(wt_{i,j})=\ell(w)+1$ and $0$ otherwise.\\

The following action is less known but closely related to the CSM classes of the Schubert cells in the flag variety. Define the \emph{extended Bruhat action} of $[ij]$ on $\zsn$ by
\begin{equation}\label{ebruhat}
w*[ij] =
\begin{cases}
wt_{i,j} & \text{if}\quad \ell(wt_{i,j})<\ell(w) \\
0 & \text{otherwise.}
\end{cases}
\end{equation}
Note that the Bruhat action is the degree $-1$ part of the extended Bruhat action. The proof that the extended Bruhat action of $[ij]$ provides a representation of $\fk$ on $\zsn$ is straightforward calculation.\\

\begin{rmk} The $*$-action can be interpreted as edges in the Bruhat graph, although author is not aware that $*$-action is used in past literatures. 
\end{rmk}

Let $\theta_i$ be the Dunkl elements defined by $\sum_{j\neq i}[ij]$. Here for $i<j$, $[ji]$ is the same as $-[ij]$. Then $\theta_i$ commutes pairwise for all $i$. Moreover, all symmetric functions in Dunkl elements vanish in $\fk$ and these are all relations between $\theta_i$. Therefore, the commutative subalgebra $\mathcal C$ generated by Dunkl elements is isomorphic to the cohomology of the flag variety. See \cite{FK99} for details.\\

Let $\mathfrak{S}_w$ be the Schubert polynomial in $x_1,\ldots,x_n$ for $w \in S_n$. It is defined recursively by
$$\partial_k \mathfrak{S}_w = \mathfrak{S}_{ws_k},$$
if $\ell(w)>\ell(ws_k)$, where $\partial_k(f)= { f- s_k(f) \over x_i-x_{i+1}}$. For the longest element $w_0$ in $S_n$, $\mathfrak{S}_w=x_1^{n-1}x_2^{n-2}\ldots x_{n-1}$.\\

In \cite{FK99}, Fomin and Kirillov claimed the following nonnegativity conjecture.
\begin{conj}[Nonnegativity conjecture \cite{FK99}] \label{posfk}
Schubert polynomials $\mathfrak{S}_w(\theta)$ evaluated at Dunkl elements (by replacing $x_i$ by $\theta_i$) can be written as a nonnegative linear combination of noncommutative monomials $[i_1,j_1][i_2,j_2]\cdots [i_m j_m]$.
\end{conj}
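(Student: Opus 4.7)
The plan is to proceed by induction on $\ell(w)$, using the recursive definition $\mathfrak{S}_{ws_k} = \partial_k \mathfrak{S}_w$ whenever $\ell(ws_k) < \ell(w)$. The base case is immediate: for a simple reflection $s_k$, one has $\mathfrak{S}_{s_k}(\theta) = \theta_1 + \cdots + \theta_k = \sum_{i \le k < j \le n}[ij]$, a nonnegative sum of length-one monomials in $\fk$. The difficulty is that $\partial_k$ does not act in any obvious positive way on noncommutative monomials, so I would not try to push the induction through on the polynomial side alone.

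Instead, I would look for a combinatorial model on the $\fk$ side. The natural candidates are reduced pipe dreams (equivalently RC-graphs) of $w$, which already give a positive monomial expansion of $\mathfrak{S}_w$ in the commuting variables $x_i$. The plan is to assign to each pipe dream $D$ of $w$ a single admissible noncommutative word $[i_1 j_1]\cdots [i_m j_m]$ in $\fk$, read off from the crossings of $D$ in a fixed order, and then to prove that the sum of these words evaluates to $\mathfrak{S}_w(\theta)$. The most promising tool for establishing this identity is Monk's formula: multiplication by $\theta_k$ in the commutative subalgebra $\mathcal C$ matches the Chevalley action of $x_k$ on Schubert classes, so one can try to match the growth of a pipe dream for $w$ into one for $w s_\alpha$ (upon appending a crossing) with a concrete rewriting in $\fk$, and then induct on $\ell(w)$ through this pipe-dream recursion rather than through divided differences.

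The hard part, and the real content of the conjecture, is to control the relation $[ij][jk] = [jk][ik] + [ik][ij]$ under such a matching. When an arbitrary product $\theta_{i_1}\cdots \theta_{i_m}$ is reduced to a sum of ordered admissible monomials, each application of this three-term rule splits a word into two, producing a branching tree whose leaves must be paired up, without cancellation, with the chosen combinatorial objects. Any systematic bijective rule faces the extra complication that the \emph{extended} Bruhat action in (\ref{ebruhat}) mixes degrees, so there is no grading that isolates the leading term.

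I should be frank that Conjecture \ref{posfk} is a long-standing open problem of Fomin and Kirillov, verified only for small $n$, and I do not expect to resolve it here. Accordingly, in the remainder of the paper the conjecture is used only as a hypothesis: in Theorem \ref{main1} we deduce the nonnegativity of the CSM classes from it, and in later sections we formulate refined positivity conjectures (such as Conjecture \ref{refined}) that are implied by, but strictly weaker than, Conjecture \ref{posfk}.
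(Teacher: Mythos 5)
You have correctly recognized that Conjecture \ref{posfk} is an open conjecture of Fomin and Kirillov, not a result proved in this paper; the paper states it only to use it as a hypothesis for Theorem \ref{main1}, and offers no proof of it. Your honest refusal to claim a proof is therefore the appropriate answer, and your concluding paragraph accurately describes the role the conjecture plays in the rest of the paper. The sketch you give of a pipe-dream approach is a reasonable account of why the conjecture is hard --- the three-term relation $[ij][jk]=[jk][ik]+[ik][ij]$ forces a branching rewriting whose leaves must be matched bijectively to combinatorial objects without cancellation --- and matches the intuition behind the known special cases recalled in Remark \ref{cases} (Pieri rule, hook shapes). One small clarification: the extended Bruhat action (\ref{ebruhat}) is not itself an obstacle to proving Conjecture \ref{posfk}, since the conjecture is a statement purely inside $\fk$ about the expansion of $\mathfrak{S}_w(\theta)$ in noncommutative monomials; the $*$-action enters only later, in Theorem \ref{main}, when the conjectured positivity is applied to deduce positivity of CSM coefficients.
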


Note that the nonnegative formula for $\mathfrak{S}_w(\theta)$ implies the combinatorial formula of the Littlewood-Richardson coefficients for the flag variety for type A from the following theorem:
\begin{thm}\cite{FK99}\label{lr}
For $w,u,v \in S_n$ with $\ell(w)=\ell(u)+\ell(v)$, the Littlewood-Richardson coefficients $p^w_{u,v}$ is the same as the coefficient of $v$ in $w \cdot \mathfrak{S}_u$.
\end{thm}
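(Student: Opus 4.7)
The plan is to transport the statement into the cohomology of the flag variety via the isomorphism of \cite{FK99} between the commutative subalgebra $\mathcal C\subset \fk$ generated by the Dunkl elements and $H^*(G/B)$, sending $\theta_i\mapsto x_i$. Under the identification $\mathbb Z[S_n]\leftrightarrow H_*(G/B)$ given by $w\leftrightarrow [X(w)]$, I expect the Bruhat action of $\mathcal C$ on $\mathbb Z[S_n]$ to realize the cap-product action of $H^*(G/B)$ on $H_*(G/B)$; once this is verified, the theorem becomes an instance of Schubert calculus.

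The first step is to verify the match at the generators. Using the convention $[ji]=-[ij]$, the Bruhat action gives
\[
w\cdot \theta_i \;=\; \sum_{k>i,\,\ell(wt_{ik})=\ell(w)-1} wt_{ik} \;-\; \sum_{j<i,\,\ell(wt_{ji})=\ell(w)-1} wt_{ji},
\]
which coincides with the Chevalley formula recalled in Section~2 applied to $\lambda=\omega_i-\omega_{i-1}=e_i$, since $\langle e_i,(e_j-e_k)^\vee\rangle=\delta_{ij}-\delta_{ik}$. Hence $\theta_i$ acts on $[X(w)]$ by cap product with $c_1(\mathcal L_{e_i})$, which represents $x_i$ in the Borel presentation. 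Because the Dunkl elements commute and satisfy exactly the relations defining $H^*(G/B)$, this matching propagates to all polynomials, yielding
\[
w\cdot \mathfrak S_u(\theta) \;=\; \mathfrak S_u\cap[X(w)] \qquad\text{in } H_*(G/B).
\]

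The second step is standard Schubert calculus. The bases $\{\mathfrak S_u\}$ of $H^*(G/B)$ and $\{[X(v)]\}$ of $H_*(G/B)$ are dual under the cap pairing $\int \mathfrak S_u\cdot [X(v)] = \delta_{u,v}$, so the coefficient of $[X(v)]$ in $\mathfrak S_u\cap [X(w)]$ is
\[
\int (\mathfrak S_u\mathfrak S_v)\cap[X(w)] \;=\; \sum_z p^z_{u,v}\,\delta_{z,w} \;=\; p^w_{u,v}
\]
whenever $\ell(w)=\ell(u)+\ell(v)$. Combined with the first step, this reads off the coefficient of $v$ in $w\cdot \mathfrak S_u(\theta)$ as $p^w_{u,v}$.

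The main obstacle I anticipate is the bookkeeping of sign and normalization choices: the antisymmetry $[ji]=-[ij]$, the identification of $\theta_i$ with $x_i$ via the weight $\omega_i-\omega_{i-1}$, and the sign in the Chevalley formula must all be aligned once and for all. Once the generators are matched, everything else propagates formally through $\mathcal C\cong H^*(G/B)$ and Poincaré duality, with no additional input needed beyond standard Schubert calculus.
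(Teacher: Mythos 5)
The paper cites this theorem from [FK99] without supplying a proof, so there is no in-paper argument to compare against. Your proposal is a complete and correct proof, and it is essentially what one extracts from [FK99]. You verify the generator match correctly --- the antisymmetry $[ji]=-[ij]$ is exactly what produces $\delta_{ij}-\delta_{ik}=\langle e_i,(e_j-e_k)^\vee\rangle$ in the Chevalley formula --- and your propagation step is sound, since the restriction of the Bruhat action to $\mathcal C$ and the cap product are both ring actions that agree on the degree-one generators under FK's isomorphism $\mathcal C\cong H^*(G/B)$, $\theta_i\mapsto x_i$. The duality step is standard Schubert calculus. Regarding the sign worry you raise: your direct Chevalley check settles it ($\theta_i\mapsto +x_i$, not $-x_i$). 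If you look ahead at the map $g(\alpha_i)=\theta_{i+1}-\theta_i$ the paper introduces in Section 7, it may appear to identify $\alpha_i = x_i-x_{i+1}$ with $\theta_{i+1}-\theta_i$ and thus contradict $\theta_i\mapsto x_i$; it does not, because there the roots $\alpha_i$ play the role of equivariant parameters in the generalized nil-Hecke algebra $\hata$, not Borel presentation variables.
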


\begin{rmk}\label{cases}
The nonnegativity conjecture is known for few $w$. Pieri rule, the case $w_{a,b}:=s_a s_{a+1}\ldots s_b$ for $1\leq a\leq b \leq n$, is conjectured in \cite{FK99} and proved by Postnikov \cite{Pos99}. The dual case $w=w_{a,b}^{-1}$ follows from Pieri rule as well. Recently, the nonnegative formula for the hook-shape case $w_{b+1,c}^{-1}w_{a,b}$ for $a\leq b\leq c$ is given by M\'{e}sz\'{a}ros, Panova, and Postnikov \cite{MPP14}. Positivity is also known for hook-shape with one box added at (2,2) in \cite{MPP14}.
\end{rmk}

\section{Application of the nonnegativity conjecture}
Let $\psi: \zsn \rightarrow \mathbb{Z}$ be the ring homomorphism defined by $w\mapsto 1$ for all $w \in S_n$. Then the following is the relation between the Fomin-Kirillov algebra and the CSM classes.

\begin{thm}\label{main}
For $w,v \in S_n$, we have
$$c(w;v)= \psi( w * \mathfrak{S}_v).$$
\end{thm}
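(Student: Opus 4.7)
The plan is to prove Theorem \ref{main} by induction on $\ell(w)$, using Aluffi--Mihalcea's recursion $T_k \csm(X(w)^\circ) = \csm(X(ws_k)^\circ)$ from Theorem \ref{rec} as the driving tool, and matching it against how the extended Bruhat $*$-action on a product of Dunkl elements changes when we replace $w$ by $ws_k$.

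The base case $w = \operatorname{id}$ is immediate: $\csm(X(\operatorname{id})^\circ) = [X(\operatorname{id})]$ forces $c(\operatorname{id}; v) = \delta_{v,\operatorname{id}}$, and the extended Bruhat action satisfies $\operatorname{id} * [ij] = 0$ for every $1 \le i < j \le n$ since $\ell(t_{ij}) > 0$. Hence $\operatorname{id} * \theta_i = 0$ for every Dunkl element, and iterating the right $*$-action through any monomial in the $\theta_i$ of positive degree we find $\operatorname{id} * \mathfrak{S}_v(\theta) = 0$ whenever $v \neq \operatorname{id}$, while $\operatorname{id} * \mathfrak{S}_{\operatorname{id}}(\theta) = \operatorname{id} * 1 = \operatorname{id}$. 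Applying $\psi$ gives the desired $\psi(\operatorname{id} * \mathfrak{S}_v(\theta)) = \delta_{v,\operatorname{id}}$.

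For the inductive step, suppose the formula holds for $w$ and fix $k$ with $\ell(ws_k) = \ell(w)+1$. Writing the matrix of $T_k$ in the Schubert basis as $[X(v)]\cdot T_k = \sum_u (T_k)_{v,u}[X(u)]$, Theorem \ref{rec} together with the inductive hypothesis reduces the claim to the key identity
\begin{equation}\label{plan:keyid}
\psi(ws_k * \mathfrak{S}_u(\theta)) \;=\; \sum_v \psi(w * \mathfrak{S}_v(\theta))\,(T_k)_{v,u} \quad \text{for all } u \in S_n.
\end{equation}
Under the Fomin--Kirillov isomorphism $\mathcal{C} \cong H^*(G/B)$, where $\mathcal{C}$ is the commutative subalgebra of $\fk$ generated by the Dunkl elements, the family $\{\mathfrak{S}_v(\theta)\}$ is a basis dual to $\{[X(v)]\}$; consequently the right-hand side of (\ref{plan:keyid}) equals $\psi(w * (\mathcal{T}_k \mathfrak{S}_u(\theta)))$ for a concretely computable operator $\mathcal{T}_k : \mathcal{C} \to \mathcal{C}$, and (\ref{plan:keyid}) becomes the operator identity
\[
\psi(ws_k * F) = \psi(w * \mathcal{T}_k F) \qquad (F \in \mathcal{C}).
\]
By linearity I would reduce to $F$ a monomial in the $\theta_i$, and then (using that $*$ is a right representation of $\fk$) peel off factors to end up with a single generator $[ij]$, where everything becomes a calculation in $\fk$.

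The principal obstacle is verifying (\ref{plan:keyid}). Because $T_k = \partial_k - s_k$ contains the non-homogeneous piece $c_1(\mathcal{L}_{\alpha_k})\partial_k$ from the Chevalley formula, the adjoint $\mathcal{T}_k$ is not merely a multiplication in $\fk$ but a combination of a divided-difference-type operator on $\mathcal{C}$ and a multiplicative Chevalley piece. Matching this against the shift $w \mapsto ws_k$ in the length conditions of the $*$-action amounts to a delicate case analysis over positive roots $\beta$ with $\ell(ws_k t_\beta) = \ell(w)$ or $\ell(wt_\beta) < \ell(w)$, invoking the three-term Fomin--Kirillov relation among distinct indices to produce the required cancellations. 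A more conceptual route, and apparently the one the paper pursues, is to first realise $T_k$ inside the generalized nil-Hecke algebra introduced in Section~5, and to translate (\ref{plan:keyid}) there into a commuting-square identity of actions on $\zsn$ and $H_*(G/B)$; once that translation is in hand, Theorem \ref{main} becomes a consequence of the structural results developed in Sections~6 and~7.
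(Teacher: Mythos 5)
You have identified the correct inductive framework: Theorem~\ref{rec} turns the claim into the operator identity you label (\ref{plan:keyid}), namely $\psi(ws_k * F) = \psi(w * \mathcal{T}_k F)$ for $F \in \mathcal C$, with $\mathcal T_k$ the transpose of $T_k = \partial_k - s_k$ in the dual Schubert bases. The base case $w = \mathrm{id}$ is handled correctly. But the proposal then stops at exactly the load-bearing step. You call (\ref{plan:keyid}) ``the principal obstacle,'' sketch in a single sentence what a direct verification would involve (a case analysis over roots $\beta$ with $\ell(ws_k t_\beta)=\ell(w)$ versus $\ell(wt_\beta)<\ell(w)$, using the three-term Fomin--Kirillov relations to force cancellations), and then defer to ``the structural results developed in Sections 6 and 7.'' That is the gap: the scaffolding is in place, but the computation that would make it a proof is missing, and nothing in the proposal indicates how the non-homogeneous Chevalley term inside $\mathcal T_k$ matches the weakened inequality $\ell(wt_{ij})<\ell(w)$ in the $*$-action.

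For comparison, the paper does not establish (\ref{plan:keyid}) either, and avoids it entirely. It first translates $T_k$ into the nil-Hecke algebra (Lemma~\ref{translate}) to obtain the closed expression $\prod_j (s_{i_j}+x_{i_j}) = \sum_v c_T(w;v)\,x_v$ (Theorem~\ref{cwv}), identifies these coefficients with sums of equivariant Bott--Samelson coefficients $p^{\bfi,[m]}_{K,K'}$ via the coproduct on $\hata$ (Theorem~\ref{ct}); then, independently, the smash product $\mathcal B = \fk \rtimes \mathbb{Q}[\hat W]$ together with the maps $d$, $ev$, $g$ expresses $w * \mathfrak{S}_v(\theta)$ as the same sum $\sum_u f(w,v,u)\,u$ of Bott--Samelson coefficients (Lemma~\ref{dx}, Corollary~\ref{gx2}, Corollary~\ref{fwvu}). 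Theorem~\ref{main} then falls out by applying $\psi$ and matching the two identifications. Your $\mathcal T_k$-based approach might well be made to work, but as it stands (\ref{plan:keyid}) is asserted rather than proved, so the proposal is incomplete. If you want to pursue the direct route, you will at minimum need an explicit formula for $\mathcal T_k$ on $\mathcal C$ (it is $\partial_k^{\ast} - s_k^{\ast}$, where $s_k^{\ast} = \mathrm{id} - \partial_k^{\ast}\,c_1(\mathcal L_{\alpha_k})$), and then prove the $[ij]$-by-$[ij]$ identity you allude to; that verification is not routine and is the whole content of the theorem.
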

Note that assuming Conjecture \ref{posfk}, we have
$$c(w;v)= \psi( w * \mathfrak{S}_v) \geq \psi(w \cdot\mathfrak{S}_v) =\sum_u p^w_{u,v}$$
by Theorem \ref{lr} and the fact that $w \cdot\mathfrak{S}_v$ is the degree $\ell(w)-\ell(v)$ part of $w * \mathfrak{S}_v$. Therefore, Conjecture \ref{posfk} guarantees Conjecture \ref{refined} for type A. Since the term $\sum_u p^w_{u,v}$ is strictly positive when $v\leq w$, Conjecture \ref{poscsm} also follows. In fact, the nonnegativity conjecture is much stronger than positivity of the CSM classes of Schubert cells. Note that it is well-known that for $v \leq w$, there exists $u$ such that $p^w_{u,v}$ is positive. \\

The proof of Theorem \ref{main} will be given in Section 7, after describing the relation between the CSM classes and the generalized nil-Hecke algebra. We discuss applications of Theorem \ref{main} in this section. \\

\begin{example}
For $\ell(w)=\ell(v)+1$, we have
$$c(w;v)= \psi( w * \mathfrak{S}_v)=\psi( w \cdot \mathfrak{S}_v)=\sum_u p^w_{u,v}>0.$$
\end{example}
\begin{example}
Consider $v=s_{i_1}\ldots s_{i_k}$ when $s_{i_j}$ pairwise commutes. Then
$$\mathfrak{S}_v=\prod_{j=1}^k \mathfrak{S}_{s_k} = \prod_{j=1}^k \sum_{a=1}^j \theta_a,$$
which is the same as
$$\prod_{j=1}^k \left(\sum_{x\leq j <y} [xy]\right).$$
This provides a positive formula for $c(w;v)$ for any $w$ and $v=s_{i_1}\ldots s_{i_k}$ such that $s_{i_j}$ pairwise commutes.
\end{example}
\begin{example} \cite{Pos99} For $1\leq a\leq b\leq n$, we have
$$\mathfrak{S}_{w_{a,b}}= \sum [x_1, y_1] \ldots [x_{b-a+1} y_{b-a+1}]$$
where the sum runs over all sequences $x_1,\ldots x_{b-a+1},y_1,\ldots,y_{b-a+1}$ satisfying (i) $x_i\leq b <y_i$; (2) $x_i$ are all distinct; (3) $y_1\leq \cdots \leq y_{b-a+1}$. It provides a positive formula for $c(w;v)$ for any $w$ and $v=w_{a,b}$.
\end{example}

See Remark \ref{cases} for $v \in S_n$ such that the nonnegativity conjecture of $\mathfrak{S}_v(\theta)$ is known. \\

It seems that conjecturally, we have a positive refinement of $c(w;v)$: for $w,v,u \in W$ there are nonnegative integers
$$f(w,v,u)= \text{ coefficient of }u \text{ in } w*\mathfrak{S}_v,$$
such that $f(w,u,v)=p^w_{u,v}$ when $\ell(w)=\ell(u)+\ell(v)$, and $c(w;v)=\sum_u f(w,v,u)$. The nonnegativity suggests that $f(w,v,u)$ are nonnegative. One natural question is whether the constants $f(w,v,u)$ have a meaning in algebraic geometry or reperesentation theory. In the next two sections, we show that $f(w,v,u)$ is a certain sum of structure constants of the cohomology of the Bott-Samelson variety.

\section{Generalized nil-Hecke algebra}
In this section, we discuss nil-Hecke algebra studied by Kostant and Kumar \cite{KK86} and generalized nil-Hecke algebra studied by Berenstein and Richmond \cite{BR15}. \\

\subsection{The nil-Hecke algebra}

Recall that $\alpha_1,\ldots,\alpha_r$ are the positive simple roots of $G$. Let $A=(a_{ij})_{i,j \in I}$, where $I$ is the set $\{1,2,\ldots,r\}$ and $a_{ij}=\langle \alpha_i , \alpha^\vee_j\rangle$. Let $S$ be the symmetric algebra of the root lattice, which is isomorphic to $H^*_T(X)$. Let $F=Frac(S)$ be the fraction field of $S$. There is a Weyl group action on $F$ given by $s_i(\alpha_j)=\alpha_j- a_{ij} \alpha_i$ and $w(q_1 q_2)=w(q_1) w(q_2)$ for $w \in W, q_1,q_2 \in S$. Let $S_W$ be the smash product $F\rtimes \mathbb{Q}[W]$ subject to the relation:
\begin{equation} \label{wq} wq= w(q) \cdot w\end{equation}
for all $q\in S, w\in W$. There is a coproduct structure in $S_W$ given by
\begin{equation}\label{coproduct}
\Delta(qw)= qw \otimes w.
\end{equation}
The coproduct $\Delta: S_W \rightarrow S_W \otimes_S S_W$ is also a homomorphism of algebras. Note that the multiplication on $S_W \otimes_S S_W$ is component-wise product: for $a,b,c,d \in S_W$, we have $(a\otimes b)\cdot(c\otimes d)=ac \otimes bd$. \\

For $i\in I$, define an element $x_i \in S_W$ by $x_i:= \alpha_i^{-1} (s_i-1)$. The nilHecke algebra $\mathbb{A}$ is a subalgebra of $S_W$ generated by $x_i$ for all $i\in I$ and $S$. Let $w \in W$ and let $w=s_{i_1}\ldots s_{i_m}$ be a reduced expression of $w$. Then the element $x_{i_1}\ldots x_{i_m}$ does not depend upon the choice of reduced expression of $w$ by \cite[Prop 4.2]{KK86}. We define $x_w$ as $x_{i_1}\ldots x_{i_m}$. Then $\{x_w\}_{w \in W}$ is a left (as well as right) $S$-basis of $\mathbb{A}$.\\

There is an induced coproduct on $\mathbb{A}$ given by
\begin{equation}\label{coproduct}
\Delta(x_i)= x_i \otimes 1 + s_i \otimes x_i = x_i \otimes 1+ (1+\alpha_i x_i) \otimes x_i.
\end{equation}

It turns out that structure constants of the coproduct is the same as the structure constants of the equivariant cohomology of the flag variety $X=G/B$.

\begin{thm} \cite{KK86} For $w,u,v \in W$, Let $\sigma_w$ be the Schubert basis in $H^*_T(X)$ Poincare dual to $[X(w)]$. Let $p^{w}_{u,v}$ be the (equivariant) Littlewood Richardson coefficients, i.e., constansts satisfying $\sigma_u \sigma_v = \sum_w p^w_{u,v} \sigma_w$. Then we have
$$\Delta(x_w)= \sum_{u,v \in W} p^{w}_{u,v} x_u \otimes x_v.$$
\end{thm}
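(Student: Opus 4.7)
This is Kostant--Kumar's theorem, and the strategy is to realize $H^*_T(G/B)$ as the $S$-linear dual of $\mathbb{A}$ so that cup product becomes the dual of the coproduct $\Delta$. Following Kostant--Kumar, I would consider the $S$-module $\Lambda := \mathrm{Hom}_S(\mathbb{A},S)$ equipped with the $S$-algebra structure dualizing $\Delta$, namely $(fg)(a) := (f\otimes g)(\Delta(a))$, and let $\{\sigma_w^\vee\}_{w\in W}$ be the $S$-basis of $\Lambda$ dual to $\{x_w\}$, so that $\sigma_w^\vee(x_v) = \delta_{w,v}$.

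The heart of the argument is to establish a canonical isomorphism of $S$-algebras $\Lambda \cong H^*_T(G/B)$ sending $\sigma_w^\vee$ to the Schubert class $\sigma_w$. The key inputs are the natural action of $\mathbb{A}$ on $H^*_T(G/B)$ under which $x_i$ acts as the BGG divided-difference operator $\partial_i$; the recursion $\partial_i \sigma_v = \sigma_{vs_i}$ when $\ell(vs_i)<\ell(v)$ and $0$ otherwise; and the twisted Leibniz rule
$$\partial_i(\phi\psi) \;=\; \partial_i(\phi)\cdot\psi + s_i(\phi)\cdot\partial_i(\psi),$$
which corresponds precisely to $\Delta(x_i) = x_i\otimes 1 + s_i\otimes x_i$ and is exactly what encodes that cup product is dual to $\Delta$. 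Concretely, one can verify the identification via the GKM/localization description of $H^*_T(G/B)$, checking on torus-fixed points that $\sigma_w \mapsto \sigma_w^\vee$ is a well-defined $S$-algebra isomorphism compatible with the $\mathbb{A}$-action.

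Once the identification is in place, the theorem is a routine extraction of coefficients. Writing $\Delta(x_w) = \sum_{u',v'} c^{u',v'}_w\, x_{u'}\otimes x_{v'}$, one computes
$$c^{u,v}_w \;=\; (\sigma_u^\vee \otimes \sigma_v^\vee)\bigl(\Delta(x_w)\bigr) \;=\; (\sigma_u^\vee\, \sigma_v^\vee)(x_w) \;=\; \Big(\sum_{w'} p^{w'}_{u,v}\, \sigma_{w'}^\vee\Big)(x_w) \;=\; p^w_{u,v},$$
which is the claimed formula. The main obstacle is the middle step: setting up the duality carefully so that cup product is genuinely dual to $\Delta$ on the Schubert basis, as opposed to merely some abstract basis. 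Once that identification is secured, the opening setup and the final coefficient extraction are essentially formal.
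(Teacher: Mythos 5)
The paper states this result with a citation to \cite{KK86} and gives no proof of its own, so there is nothing internal to compare against; what you have written is, correctly, the standard Kostant--Kumar argument. Your dualization set-up (take $\Lambda = \mathrm{Hom}_S(\mathbb{A},S)$ with product $(fg)(a)=(f\otimes g)\Delta(a)$, identify $\Lambda\cong H^*_T(G/B)$ with $\sigma_w^\vee\mapsto\sigma_w$, then extract coefficients) is the right skeleton, and the coefficient extraction at the end is genuinely formal once the identification is secured. I also checked your claim that the twisted Leibniz rule $\partial_i(\phi\psi)=\partial_i(\phi)\psi+s_i(\phi)\partial_i(\psi)$ is exactly what makes $x_i\mapsto x_i\otimes 1+s_i\otimes x_i$ the correct coproduct under this duality: with $\mathbb{A}$ acting on $\Lambda$ by $(a\cdot f)(b)=f(ba)$ one gets $x_i(fg)=(x_if)g+(s_if)(x_ig)$, which matches.

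The one place I would push back is that you describe the isomorphism $\Lambda\cong H^*_T(G/B)$ as something ``one can verify via the GKM/localization description.'' That step is not a verification; it is essentially all of the content of \cite{KK86}. In particular you need (i) to show $\Delta$ descends from $S_W\otimes_{S}S_W$ to a well-defined coassociative map on $\mathbb{A}\otimes_S\mathbb{A}$ (the tensor product over the non-central ring $S$ requires care, and the paper's ``component-wise'' multiplication on $S_W\otimes_S S_W$ already hides this), (ii) to construct the dual Schubert basis $\{\sigma_w^\vee\}$ and show it restricts at $T$-fixed points to the Kostant--Kumar $\xi^w$-functions, and (iii) to compare with the geometric Schubert basis. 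You have correctly identified this as ``the main obstacle,'' but as a proof one would need to either carry it out or cite \cite{KK86} for it, which is what the paper does. A second, minor point worth making explicit: $\Delta(x_i)=x_i\otimes 1+s_i\otimes x_i$ is visibly not cocommutative, yet $p^w_{u,v}$ must be symmetric in $u,v$ since cup product is commutative; that symmetry of the structure constants in the $\{x_u\otimes x_v\}$ basis is itself a nonobvious consequence of the identification with $H^*_T(G/B)$, not something one can read off from the formula for $\Delta(x_i)$.
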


The follwing lemma will be useful when computing $\Delta(x_w)$.
\begin{lemma}\label{xi} For $i \in I$ and $\lambda \in \mathfrak{h}^*$, we have
$$x_i \lambda = s_i(\lambda) x_i - \langle \lambda, \alpha^\vee \rangle$$
\end{lemma}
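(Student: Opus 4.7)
The plan is a direct computation from the definition $x_i = \alpha_i^{-1}(s_i - 1)$ together with the commutation relation \eqref{wq} and the formula $s_i(\lambda) = \lambda - \langle \lambda, \alpha_i^\vee\rangle \alpha_i$ for the action of a simple reflection on a weight. There is no clever idea required: the lemma is essentially the statement that commuting $\lambda$ past $x_i$ in $S_W$ produces a boundary term given by the pairing with the coroot, analogous to the Leibniz rule for the divided difference operator.

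First I would expand
\[
x_i \lambda = \alpha_i^{-1}(s_i - 1)\lambda = \alpha_i^{-1} s_i \lambda - \alpha_i^{-1}\lambda,
\]
and then apply \eqref{wq} to rewrite $s_i \lambda = s_i(\lambda)\, s_i$. Since $\alpha_i^{-1}$ and $s_i(\lambda)$ both lie in $F$ they commute, so this gives
\[
x_i \lambda = s_i(\lambda)\,\alpha_i^{-1} s_i - \alpha_i^{-1}\lambda.
\]
Next I would compute $s_i(\lambda) x_i = s_i(\lambda)\,\alpha_i^{-1}(s_i - 1) = s_i(\lambda)\,\alpha_i^{-1} s_i - \alpha_i^{-1} s_i(\lambda)$. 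Subtracting,
\[
x_i \lambda - s_i(\lambda)\, x_i = \alpha_i^{-1}\bigl(s_i(\lambda) - \lambda\bigr).
\]

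To finish, I would substitute $s_i(\lambda) - \lambda = -\langle \lambda, \alpha_i^\vee\rangle \alpha_i$, so that $\alpha_i^{-1}$ cancels $\alpha_i$ and the right-hand side collapses to $-\langle \lambda, \alpha_i^\vee\rangle$, yielding the claimed identity (with the evident correction that the pairing should be against $\alpha_i^\vee$, not an unspecified $\alpha^\vee$). The main ``obstacle'' is only a notational one: one has to be careful that the elements $\alpha_i^{-1}$, $\lambda$, and $s_i(\lambda)$ all sit in the commutative subfield $F$, while $s_i$ does not commute with them, and to apply the relation \eqref{wq} in the correct direction. Once this bookkeeping is done, the identity follows in three lines.
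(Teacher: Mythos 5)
Your proof is correct and follows essentially the same route as the paper's: both apply relation \eqref{wq} for $w = s_i$ and then substitute $s_i(\lambda) = \lambda - \langle\lambda,\alpha_i^\vee\rangle\alpha_i$; the paper happens to package the computation using $s_i = 1 + \alpha_i x_i$ while you expand $x_i = \alpha_i^{-1}(s_i - 1)$ directly, but these are the same algebraic manipulation. You are also right that the $\alpha^\vee$ in the statement is a typo for $\alpha_i^\vee$.
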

\begin{proof} By Equation (\ref{wq}) for $w=s_i$ and $q=\lambda \in \mathfrak{h}^* \subset S$, we have
$$(1+\alpha_ix_i) \lambda= s_i(\lambda) (1+\alpha_i x_i).$$
By using $s_i(\lambda)=\lambda-\langle \lambda, \alpha^\vee \rangle \alpha_i$ and simplyfing, the lemma follows. \end{proof}
\subsection{The generalized nil-Hecke algebra}

Let $\hat W$ be the free monoid generated by $s_1,\ldots, s_r \in W$. There is an surjective homomorphism of monoids $\mu: \hat W \rightarrow W$ sending $s_i$ to $s_i$. The action of $\hat W$ on $S$ is induced by $\mu$. The generalized nil-Hecke algebra $\hata$ is the smash product $S \rtimes \mathbb{Q}[\hat W]$. For $\bfi=({i_1},\ldots, {i_m}) \in I^m$, define $x_\bfi$ by $x_{i_1}\ldots x_{i_m}$. Then the set $\{ x_\bfi \mid \bfi \in I^m \text{ for } m\geq 0 \}$ forms a $S$-basis of $\hata$. Note that $\mu$ induces the surjection $\hata \rightarrow \mathbb{A}$, denoted by the same $\mu$. Let $w_\bfi$ be the element $s_{i_1}\ldots s_{i_m}$ in $W$ and let $\ell(\bfi)$ be $m$ for $\bfi \in I^m$. It is obvious from the definition that $\ell(\bfi)\geq \ell(w_\bfi)$, and equality holds if and only if $s_{i_1}\ldots s_{i_m}$ is a reduced word of $w_\bfi$, and we call $\bfi$ reduced. The surjection $\mu$ sends $x_\bfi$ to $x_{w_\bfi}$ if $\bfi$ is reduced, 0 otherwise.\\

The generalized nil-Hecke algebra also has a coproduct structure induced by (\ref{coproduct}), and the structure constants is the same as the structure constants of the $T$-equivariant cohomology of the Bott-Samelson varieties. We follow the notations in \cite{BR15} to review some results about Bott-Samelson variety. \\

 For $\bfi=(i_1,\ldots,i_m) \in I^m$ and a subset $K=\{m_1<\ldots<m_k\}$ of $[m]$, denote $\bfi_K$ the subsequence $(i_{m_1},\ldots, i_{m_k}) \in I^k$ of $\bfi$. For each $\bfi=(i_1,\ldots,i_m) \in I^m$, the $\bfi$-th Bott-Samelson variety $\Gamma_\bfi=\Gamma_\bfi(G)$ of $G$ is defined by:
$$\Gamma_\bfi = \big( P_{i_1} \times_B P_{i_2} \times_B \ldots \times_B P_{i_m} \big) /B $$
where $P_i, i\in I$ stands for the $i$-th minimal parabolic subgroup. It is well-known that $T$-equivariant cohomology $H^*_T(\Gamma_\bfi)$ has a $S$-basis $\{\sigma^T_K\}$, where $K$ runs over all subsets of $[m]$. The equivariant Bott-Samelson coefficients $p^{\bfi,K}_{K',K''} \in S$ are defined by
$$\sigma^T_{K'} \sigma^T_{K''}= \sum p^{\bfi,K}_{K',K''} \sigma^T_K,$$ 
where the summation runs over all subset $K\subset [m]$ such that $K' \cup K'' \subset K$ and $|K| \leq |K'|+|K''|$. The structure constants of the ordinary cohomology of the Bott-Samelson varieties are $p^{\bfi,K}_{K',K''}$ with $|K|=|K'|+|K''|$ and these constants are integers, but not necessarily nonnegative integers.
\\

The following result was proved by H. Duan \cite{Duan} for the ordinary cohomology and by M. Willems \cite{Wil04} in the equivariant setting.

\begin{thm} \label{mu} Let $G$ be a Kac-Moody group and $W$ be its Weyl group. Then for any sequence $\bfi=(i_1,\ldots,i_m) \in I^m$ one has:
\begin{enumerate}
\item The pullback of the canonical projection $\mu_\bfi : \Gamma_\bfi \rightarrow G/B$ is an algebra homomorphism $\mu^*_\bfi: H^*_T(G/B)\rightarrow H^*_T(\Gamma_\bfi)$ given by
$$\mu^*_\bfi(\sigma^T_w)= \sum_{K \subset [m], \bfi_K \in R(w)} \sigma^T_K$$
for all $w\in W$ (If there is no $K$ satisfying $\bfi_K\in R(w)$, $\mu^*_\bfi(\sigma^T_w)=0$). Here $R(w)$ is the set consisting of all reduced words of $w$.
\item If $\bfi \in R(w)$ for some $w\in W$, then for any $u,v \in W$, the equivariant Littlewood-Richardson and Bott-Samelson coefficients are related by:
$$ p^w_{u,v} = \sum p^{\bfi,[m]}_{K',K''},$$
where the summation runs over all subsets $K',K'' \subset [m]$ such that $\bfi_{K'} \in R(u), \bfi_{K''} \in R(v)$.
\end{enumerate}
\end{thm}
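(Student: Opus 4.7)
The plan is to prove part (1) first by equivariant localization, and then derive part (2) from the fact that $\mu^*_\bfi$ is a ring homomorphism. For part (2), once (1) is established the argument is short: since $\bfi \in R(w)$ forces $m=\ell(w)$, the only $K \subseteq [m]$ of cardinality $m$ is $K=[m]$ itself, so (1) gives $\mu^*_\bfi(\sigma^T_w)=\sigma^T_{[m]}$. Applying $\mu^*_\bfi$ to the identity $\sigma^T_u \sigma^T_v = \sum_{w'} p^{w'}_{u,v}\sigma^T_{w'}$, using multiplicativity of the pullback and expanding via (1) and the definition of the Bott-Samelson coefficients, and finally extracting the coefficient of $\sigma^T_{[m]}$ on both sides, yields the stated formula for $p^w_{u,v}$.

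For part (1), I would proceed via equivariant localization at the $T$-fixed points of $\Gamma_\bfi$. These fixed points are naturally indexed by subsets $K' \subseteq [m]$: to $K'$ one associates the point $e_{K'}=[g_1,\ldots,g_m]$, where $g_j$ is a lift of $s_{i_j}$ for $j\in K'$ and of the identity otherwise; under $\mu_\bfi$ the fixed point $e_{K'}$ maps to $w_{\bfi_{K'}} B/B$. Since $H^*_T(\Gamma_\bfi)$ injects into $\bigoplus_{K'} H^*_T(\mathrm{pt})$ by the localization theorem, it suffices to verify the identity after restriction to each $e_{K'}$. On the Bott-Samelson side, $\sigma^T_K|_{e_{K'}}$ vanishes unless $K \subseteq K'$ and is otherwise a product of specific roots determined by $\bfi$ and the positions in $K$ relative to $K'$; on the flag variety side, $\sigma^T_w|_{w_{\bfi_{K'}}B/B}$ admits an AJS/Billey-type expansion as a weighted sum over reduced subwords of $\bfi_{K'}$ whose product equals $w$. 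The two sides then match term-by-term via the natural bijection between reduced subwords of $\bfi_{K'}$ with product $w$ and subsets $K \subseteq K'$ with $\bfi_K \in R(w)$, once one checks that the Billey weights on the flag side coincide with the Bott-Samelson restriction weights.

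The main obstacle will be establishing and reconciling the two restriction formulas---one for $\sigma^T_K \in H^*_T(\Gamma_\bfi)$ and one for $\sigma^T_w \in H^*_T(G/B)$---and verifying that the roots appearing in each weight coincide under the natural bijection of subwords. Once these formulas are in place the combinatorial matching is essentially a tautology, but setting up the Bott-Samelson restriction formula rigorously, by identifying a canonical reduced expression of $w_{\bfi_{K'}}$ inside $\bfi_{K'}$ and tracking how equivariant weights transform under the iterated $\mathbb{P}^1$-bundle structure of $\Gamma_\bfi$, is where the substantive work lies; this is essentially the content of Willems' proof in \cite{Wil04}.
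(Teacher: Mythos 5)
The paper does not actually prove Theorem~\ref{mu}; it is stated as a citation, attributed to Duan \cite{Duan} for ordinary cohomology and to Willems \cite{Wil04} for the equivariant case, and no argument is given in the text. So there is no internal proof for your attempt to match. That said, your logic is sound as far as it goes.

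Your derivation of (2) from (1) is correct and complete. Since $\bfi\in R(w)$ forces $m=\ell(w)$, the only $K\subseteq[m]$ with $\bfi_K\in R(w)$ is $K=[m]$, so $\mu^*_\bfi(\sigma^T_w)=\sigma^T_{[m]}$ and, since $\bfi=\bfi_{[m]}$ is a reduced word for $w$ alone, no other $w'$ contributes a $\sigma^T_{[m]}$ term when you expand $\mu^*_\bfi\bigl(\sum_{w'} p^{w'}_{u,v}\sigma^T_{w'}\bigr)$. Equating coefficients of $\sigma^T_{[m]}$ on both sides of $\mu^*_\bfi(\sigma^T_u\sigma^T_v)$ then gives exactly the stated identity. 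This is the standard way (2) is obtained from (1).

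Your plan for (1) via localization at the $T$-fixed points $e_{K'}$ of $\Gamma_\bfi$ is the right approach and is essentially what Willems does. But note that your proposal never actually establishes the two restriction formulas it relies on: the formula for $\sigma^T_K\big|_{e_{K'}}$ on the Bott--Samelson side, and the AJS/Billey formula for $\sigma^T_w\big|_{w_{\bfi_{K'}}B/B}$ on the flag side, nor does it verify that the weights coincide under the bijection between reduced subwords and subsets. You say yourself that this is ``where the substantive work lies.'' So, strictly read, part (1) of your proposal is a correct roadmap rather than a proof: everything nontrivial is deferred to \cite{Wil04}. Since the paper itself only cites the result, that deferral is acceptable here, but it should be stated as a citation rather than presented as a proof.
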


Let $\mu : \hata \rightarrow \mathbb{A}$ be the map sending $A_i$ to $A_i$. In other words, $\mu$ sends $A_\bfi$ to $A_{w_\bfi}$ if $\ell(\bfi)=\ell(w_\bfi)$, and 0 otherwise. One can define the coproduct $\Delta : \hata \rightarrow \hata \otimes_S \hata$ defined by the equation $(4)$ and $\Delta(ab)=\Delta(a)\Delta(b)$. Note that the definition is the same as the coproduct on $\mathbb{A}$, so that we have $\Delta\circ \mu = (\mu\otimes \mu)\circ \Delta$. 
For words $\bfi,\bfi', \bfi''$, let $p^{\bfi}_{\bfi', \bfi''}$ be an element in $S$ satisfying 
\begin{align}\label{delta} \Delta(x_\bfi)= \sum p^{\bfi}_{\bfi', \bfi''} x_{\bfi'} \otimes x_{\bfi''}.\end{align}
 In \cite{BR15}, those elements are called \emph{relative} (equivariant) Littlewood-Richardson coefficients. The relative equivariant Littlewood-Richardon coefficients and the equivariant Bott-Samelson coefficients are related by the following theorem.

\begin{thm} \label{relative} \cite{BR15} For words $\bfi,\bfi', \bfi''$, we have
$$p^{\bfi}_{\bfi', \bfi''}=\sum p^{\bfi,[m]}_{K',K''}$$
where the sum runs over all pairs $K',K'' \subset [m]$ such that $\bfi_{K'}=\bfi',\bfi_{K''}=\bfi''$.
\end{thm}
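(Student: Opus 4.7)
The approach is to induct on $m=\ell(\bfi)$, matching on each side a recursion obtained by removing the last simple reflection from $\bfi$. The base case $m=0$ is immediate: both $p^{\emptyset}_{\bfi',\bfi''}$ and the corresponding sum equal $1$ when $\bfi'=\bfi''=\emptyset$ and vanish otherwise.

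For the inductive step, set $\bfi^{-}=(i_1,\ldots,i_{m-1})$, so that $x_\bfi = x_{\bfi^{-}}x_{i_m}$ in $\hata$. Multiplicativity of $\Delta$ gives
\[
\Delta(x_\bfi)=\Delta(x_{\bfi^{-}})\cdot\bigl(x_{i_m}\otimes 1 + s_{i_m}\otimes x_{i_m}\bigr).
\]
Substituting the inductive formula for $\Delta(x_{\bfi^{-}})$ and using $s_{i_m}=1+\alpha_{i_m}x_{i_m}$, I would expand the right-hand side. To collect each contribution in the form $c\cdot x_{\bfj'}\otimes x_{\bfj''}$ with $c\in S$, Lemma~\ref{xi} is applied repeatedly to commute the scalar $\alpha_{i_m}$ past the $x_{\bfj'}$ factor appearing on the left tensorand. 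This produces a closed algebraic recursion expressing every $p^{\bfi}_{\bfj',\bfj''}$ in terms of the coefficients $p^{\bfi^{-}}_{\cdot,\cdot}$.

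On the Bott--Samelson side, the projection $\pi_m:\Gamma_\bfi\to\Gamma_{\bfi^{-}}$ forgetting the last factor is a Zariski-locally-trivial $\mathbb{P}^1$-bundle with fiber $P_{i_m}/B$. Consequently $H^*_T(\Gamma_\bfi)$ is free of rank $2$ over $H^*_T(\Gamma_{\bfi^{-}})$, generated (as an algebra) by a section class $\xi_m$ whose self-intersection involves the weight $\alpha_{i_m}$. The basis $\{\sigma^T_K\}_{K\subset[m]}$ splits according to whether $m\in K$: the classes with $m\notin K$ are pulled back from $\Gamma_{\bfi^{-}}$, while the classes with $m\in K$ arise from multiplying a pullback by $\xi_m$. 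Expanding $\sigma^T_{K'}\sigma^T_{K''}$ in this decomposition (this is the recursion going back to Willems) yields a recursion for $p^{\bfi,[m]}_{K',K''}$ in terms of $p^{\bfi^{-},[m-1]}_{\cdot,\cdot}$, with four cases according as $m$ lies in each of $K',K''$.

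The final task is to verify that the four geometric cases match the algebraic expansion above. The cases $m\notin K'\cup K''$, $m\in K'\setminus K''$, and $m\in K''\setminus K'$ are easy: they correspond respectively to the bare inductive contribution, the $x_{i_m}\otimes 1$ term, and the $1\otimes x_{i_m}$ part of $s_{i_m}\otimes x_{i_m}$. The main obstacle is the mixed case $m\in K'\cap K''$: on the algebraic side the $\alpha_{i_m}x_{i_m}\otimes x_{i_m}$ portion of $\Delta(x_{i_m})$ generates a family of $S$-valued coefficients via Lemma~\ref{xi}, while on the geometric side the self-intersection of $\xi_m$ generates the corresponding $\alpha_{i_m}$-weighted corrections. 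Matching these is the crux of the induction, and it amounts to recognising that both sides are governed by the same twisted Leibniz rule baked into the formula $\Delta(x_{i_m})=x_{i_m}\otimes 1+s_{i_m}\otimes x_{i_m}$; once the subset/word bookkeeping is aligned (each pair $(K',K'')$ with $\bfi_{K'}=\bfj'$, $\bfi_{K''}=\bfj''$ labelling exactly one term of the expansion), the identity drops out.
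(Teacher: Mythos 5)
The paper does not contain a proof of Theorem~\ref{relative}: the statement is quoted from Berenstein and Richmond \cite{BR15}, so there is no in-paper argument to compare against. Evaluated on its own, your parallel-induction strategy (coproduct recursion on $\hata$ versus $\mathbb{P}^1$-bundle recursion on $\Gamma_\bfi\to\Gamma_{\bfi^{-}}$) is a reasonable outline, but as written it has a genuine gap at exactly the step you flag as the crux.

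The mixed case $m\in K'\cap K''$ is not actually closed; saying ``the identity drops out'' because ``both sides are governed by the same twisted Leibniz rule'' presupposes what needs to be proved. On the algebraic side, after writing $s_{i_m}\otimes x_{i_m}=1\otimes x_{i_m}+\alpha_{i_m}x_{i_m}\otimes x_{i_m}$ and multiplying into $\Delta(x_{\bfi^{-}})=\sum p^{\bfi^{-}}_{\bfj',\bfj''}\,x_{\bfj'}\otimes x_{\bfj''}$, you must push the scalar $\alpha_{i_m}$ leftward through $x_{\bfj'}$; iterating Lemma~\ref{xi} does not produce a single correction but a full Chevalley-type expansion of $x_{\bfj'}\alpha_{i_m}$ into shorter words with coefficients $\langle\alpha_{i_m},\cdot\rangle$. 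On the geometric side, the quadratic relation $\xi_m^2=c_1\xi_m-c_2$ has coefficients $c_1,c_2\in H^*_T(\Gamma_{\bfi^{-}})$, \emph{not} in $S$, and $c_1$ must itself be expanded in the basis $\{\sigma^T_K\}_{K\subset[m-1]}$ via a Bott--Samelson Chevalley formula. The induction only closes if these two expansions match term by term, and that matching is a real computation you neither state nor carry out. There is a further unverified assumption: that for $m\in K$ one has $\sigma^T_K=\pi^*\sigma^T_{K\setminus\{m\}}\cdot\xi_m$ (or a determined variant), which depends on the normalization of the Bott--Samelson classes and must be reconciled with the convention under which the coefficients $p^{\bfi,[m]}_{K',K''}$ are defined. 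Until the Chevalley-type expansions on both sides are pinned down and shown to coincide, the argument is a plan rather than a proof.
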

 
Note that one can compute $p^{\bfi,[m]}_{\bfi', \bfi''}$ by using (\ref{coproduct}).
\begin{thm}\label{compute}
For $K' \subset [m]$ and $j \in [m]$, let $E_{\bfi,K'}(j)$ be $s_{i_j}$ if $j \in K'$ and $A_{i_j}$ otherwise. Then $p^{\bfi}_{\bfi', \bfi''}$ is the coeeficient of $x_{\bfi''}$ in $\sum_{K'}\prod_{j=1}^m E_{\bfi,K'}(j)$,
where the sum runs over all $K'$ satisfying $\bfi_{K'}=\bfi'$.
\end{thm}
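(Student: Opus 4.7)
The plan is to compute $\Delta(x_\bfi)$ directly by multiplicativity of the coproduct and then read off the structure constants from equation (\ref{delta}). Since $\Delta:\hata \to \hata\otimes_S\hata$ is an algebra homomorphism and multiplication in the tensor product is componentwise $(a\otimes b)(c\otimes d) = ac\otimes bd$, I would begin with
\[
\Delta(x_\bfi) \;=\; \prod_{j=1}^m \Delta(x_{i_j}) \;=\; \prod_{j=1}^m \bigl(x_{i_j}\otimes 1 + s_{i_j}\otimes x_{i_j}\bigr),
\]
and distribute the product over all $2^m$ binary choices. Each choice is naturally indexed by a subset $K'\subseteq[m]$ recording the positions at which the summand $s_{i_j}\otimes x_{i_j}$ is selected rather than $x_{i_j}\otimes 1$.

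For a fixed $K'$, tracking the two tensor slots separately produces the summand
\[
\Bigl(\prod_{j=1}^m E_{\bfi,K'}(j)\Bigr) \otimes \Bigl(\prod_{j\in K'} x_{i_j}\Bigr),
\]
where all products are taken in increasing order of $j\in[m]$. The left factor is exactly the ordered product $\prod_{j=1}^m E_{\bfi,K'}(j)$ appearing in the statement of the theorem, while the right factor collapses to $x_{\bfi_{K'}}$, since each position $j\notin K'$ contributes the identity. Summing over $K'$ yields the master identity
\[
\Delta(x_\bfi) \;=\; \sum_{K'\subseteq[m]} \Bigl(\prod_{j=1}^m E_{\bfi,K'}(j)\Bigr) \otimes x_{\bfi_{K'}}.
\]

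To finish, I would expand each left tensor factor in the $S$-basis $\{x_{\bfj}\}$ of $\hata$ and group the summands by which subword $\bfi_{K'}$ appears in the right slot: the $K'$ with $\bfi_{K'}$ equal to a prescribed word contribute precisely to the $x_{\bfi'}\otimes x_{\bfi''}$ piece in the sense of (\ref{delta}), and extracting the coefficient of the appropriate basis element on the left gives $p^\bfi_{\bfi',\bfi''}$ in the form stated by the theorem.

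The only point requiring genuine care is expressing each ordered product $\prod_j E_{\bfi,K'}(j)$ in the $\{x_{\bfj}\}$-basis, because $s_{i_j}$ and $x_{i_k}$ do not commute in $\hata$. This is handled by substituting $s_{i_j} = 1 + \alpha_{i_j}x_{i_j}$ and repeatedly using the commutation rule of Lemma \ref{xi} to move polynomial coefficients past $x$-factors. Beyond this bookkeeping and keeping careful track of the order of the factors in the componentwise tensor product, the proof is purely mechanical and I do not anticipate any real conceptual obstacle.
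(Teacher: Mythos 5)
Your approach is exactly the one the paper intends: the paper's entire proof is the one-sentence remark that the claim ``directly follows from (\ref{coproduct}) and the fact that $\Delta$ is an algebra homomorphism,'' and your write-up is the correct expansion of that remark, with the master identity
$\Delta(x_\bfi)=\sum_{K'\subseteq[m]}\bigl(\prod_{j=1}^m E_{\bfi,K'}(j)\bigr)\otimes x_{\bfi_{K'}}$
being precisely what is meant. One small point worth making explicit: your expansion places $x_{\bfi_{K'}}$ in the \emph{right} tensor slot, so matching against $\Delta(x_\bfi)=\sum p^{\bfi}_{\bfi',\bfi''}\,x_{\bfi'}\otimes x_{\bfi''}$ directly gives that $p^{\bfi}_{\bfi',\bfi''}$ is the coefficient of $x_{\bfi'}$ in $\sum_{K':\,\bfi_{K'}=\bfi''}\prod_j E_{\bfi,K'}(j)$, i.e.\ with the roles of $\bfi'$ and $\bfi''$ swapped relative to the theorem's statement. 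This is harmless because $p^{\bfi}_{\bfi',\bfi''}=p^{\bfi}_{\bfi'',\bfi'}$: the coproduct on $\hata$ is cocommutative, since $\Delta(x_i)=x_i\otimes 1+1\otimes x_i+\alpha_i x_i\otimes x_i$ after writing $s_i=1+\alpha_i x_i$ and moving the scalar $\alpha_i$ across $\otimes_S$, and the flip map is an algebra homomorphism for the componentwise product (equivalently, the symmetry follows from commutativity of $H^*_T(\Gamma_\bfi)$ via Theorem~\ref{relative}). You should either invoke this symmetry explicitly or phrase the coefficient extraction with $\bfi''=\bfi_{K'}$; as written, ``extracting the coefficient of the appropriate basis element on the left gives $p^{\bfi}_{\bfi',\bfi''}$'' quietly glosses over the swap.
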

\begin{proof} It directly follows from (\ref{coproduct}) and the fact that $\Delta$ is an algebra homomorphism. \end{proof}

\begin{example}
Let $\bfi=(1,2,1)$ and $\bfi' =(1,1)$. so that there is only one $K'=\{1,3\}$ satisfying $\bfi_{K'}=\bfi'$. Consider $\prod_{j=1}^3 E_{\bfi,K'}(j)=s_1A_2s_1$.\\
\begin{align*}
s_1A_2s_1&=(1+\alpha_1 A_1) A_2 (1+\alpha_1 A_1)\\
&=A_2+\alpha_1 A_{1,2}+ A_2 \alpha_1 A_1+ \alpha_1 A_{1,2} \alpha_1 A_1 \\
&=  A_2+\alpha_1 A_{1,2}+A_1+ (\alpha_1+\alpha_2) A_{2,1}+ \alpha_1 A_{1,1} - \alpha_1 A_{2,1} + \alpha_1\alpha_2 A_{1,2,1}\\
&= A_2+\alpha_1 A_{1,2}+A_1+\alpha_2 A_{2,1}+ \alpha_1 A_{1,1}+ \alpha_1\alpha_2 A_{1,2,1}
\end{align*}
Note that we frequently used Lemma \ref{xi}. The above computation shows that, for example, $p^{121}_{11,21}=\alpha_2$ by Theorem \ref{compute}.

\end{example}
\section{Relationship between the CSM classes and the equivariant Bott-Samelson coefficients}
The goal of this section is to relate $c_T(w;v)$ with the equivariant Bott-Samelson coefficients $p^{\bfi,[m]}_{K',K''}$. \\

Let $\phi$ the isomorphism between the equivariant homology of the flag variety $H^T_*(X)$ and the nilHecke algebra $\mathbb{A}$ as $S$-modules sending $[X_w]^T$ to $x_i \in \mathbb{A}$. Then the Weyl group action and the divided difference operator $\partial_k$ can be purely written in the nilHecke algebra.

\begin{lem} \label{translate} For $k \in I$ and $\xi \in H^T_*(X)$, we have
\begin{align*}
\phi( \partial_k \xi )&= \phi(\xi) x_k,\\
\phi( s_k \xi )&= -\phi(\xi)s_k,\\
\phi( T_k \xi )&= \phi(\xi)(x_k+s_k).
\end{align*}
In the last two equations, the $s_k$ on the left-hand side is the Weyl group action on $H^T_*(X)$, but the $s_k$'s on the right-hand side are an element in $\mathbb{A}$.
\end{lem}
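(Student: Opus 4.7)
Since $\phi$ is an $S$-module isomorphism and the Schubert classes $\{[X(w)]^T\}_{w\in W}$ form an $S$-basis of $H^T_\ast(X)$, I reduce each of the three identities to checking on a single basis element $\xi=[X(w)]^T$, for which $\phi(\xi)=x_w$.

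The first identity is a direct check in the nil-Hecke algebra. By the definition of the BGG operator, $\partial_k[X(w)]^T$ equals $[X(ws_k)]^T$ or $0$ according as $\ell(ws_k)>\ell(w)$ or not, so $\phi(\partial_k\xi)$ equals $x_{ws_k}$ or $0$. On the other side, the relation $x_k = \alpha_k^{-1}(s_k-1)$ together with $s_k(\alpha_k^{-1})=-\alpha_k^{-1}$ gives $x_k^2=0$ in $\mathbb{A}$ by a one-line calculation. Combined with the fact that $x_w=x_{i_1}\cdots x_{i_m}$ for any reduced word, this yields $x_w x_k = x_{ws_k}$ when $\ell(ws_k)>\ell(w)$ (the concatenation is reduced) and $x_w x_k = x_{ws_k}x_k^2 = 0$ otherwise (write $x_w=x_{ws_k}x_k$). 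Matching shows $\phi(\partial_k\xi)=\phi(\xi)x_k$.

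For the second identity, I start from the equivariant analogue of the defining formula $s_k=\operatorname{id}-c_1^T(\mathcal L_{\alpha_k})\partial_k$ for the Weyl-group action on $H^T_\ast(X)$. Applying $\phi$ and using the first identity, the desired equality $\phi(s_k\xi)=-\phi(\xi)s_k = -x_w-x_w\alpha_k x_k$ reduces to showing
\[
\phi\bigl(c_1^T(\mathcal L_{\alpha_k})\cdot\partial_k\xi\bigr) \;=\; 2x_w + x_w\alpha_k x_k.
\]
When $\ell(ws_k)<\ell(w)$ the left side is $0$, and the right side also vanishes: writing $w=w's_k$ so $x_w=x_{w'}x_k$, Lemma~\ref{xi} gives $x_k\alpha_k=-\alpha_k x_k-2$, hence $x_k\alpha_k x_k=-2x_k$ (using $x_k^2=0$), so $x_w\alpha_k x_k=-2x_w$. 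When $\ell(ws_k)>\ell(w)$, the equivariant Chevalley formula applied to $c_1^T(\mathcal L_{\alpha_k})\cdot[X(ws_k)]^T$ produces a leading contribution $\langle\alpha_k,\alpha_k^\vee\rangle[X(w)]^T=2[X(w)]^T$ from $\beta=\alpha_k$, together with lower-order terms indexed by other positive roots and a diagonal correction. These remaining Chevalley terms are matched, term by term, against the nil-Hecke expansion of $x_w\alpha_k x_k$ obtained by repeatedly applying Lemma~\ref{xi} to push $\alpha_k$ rightward through a reduced decomposition. The third identity is then immediate from $T_k=\partial_k-s_k$: $\phi(T_k\xi)=\phi(\partial_k\xi)-\phi(s_k\xi)=\phi(\xi)x_k+\phi(\xi)s_k=\phi(\xi)(x_k+s_k)$.

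\emph{Main obstacle.} All of the substance lies in the second identity, specifically in the ``ascending'' case $\ell(ws_k)>\ell(w)$, where the matching of the equivariant Chevalley expansion with the nil-Hecke expansion of $x_w\alpha_k x_k$ must be carried out term by term. This is routine but tedious; at bottom it encodes the Kostant--Kumar compatibility between right multiplication in $\mathbb{A}$ and the cap-product action on equivariant Schubert homology, with the global sign reflecting the Poincaré-duality convention implicit in the identification $H^T_\ast(X)\cong H^\ast_T(X)$.
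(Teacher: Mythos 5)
Your structural plan coincides with the paper's: reduce to the Schubert basis $\xi=[X(w)]^T$, verify the first identity by the observation $x_w x_k = x_{ws_k}$ or $0$ according as $\ell(ws_k)\gtrless \ell(w)$ (using $x_k^2=0$), and obtain the third identity formally from the first two via $T_k=\partial_k-s_k$. Your verification of the descending case $\ell(ws_k)<\ell(w)$ of the second identity, using $x_k\alpha_k x_k=-2x_k$, is correct and is a worthwhile consistency check. Where the two diverge is in how the substance of the second identity is handled. The paper simply cites the identification of the Aluffi--Mihalcea conventions for the Weyl group action with the nil-Hecke conventions, pointing to \cite{AM15} and Kumar's book \cite[(11.1.2)]{Kum02}; that citation is carrying the entire content of the equation $\phi(s_k\xi)=-\phi(\xi)s_k$.

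You instead propose to re-derive it directly by equating the equivariant Chevalley expansion of $c_1^T(\mathcal L_{\alpha_k})\cdot[X(ws_k)]^T$ with the expansion of $x_w\alpha_k x_k$ in $\mathbb{A}$. This is a legitimate route, but the ascending case is not ``routine but tedious'' in the sense of a finite check: the term-by-term matching you defer is precisely the Kostant--Kumar statement that the cap-product action of $H^*_T(X)$ on $H^T_*(X)$ transports under $\phi$ to an explicit right-module structure on $\mathbb{A}$, with the Chevalley coefficients $\langle\alpha_k,\beta^\vee\rangle$ and the diagonal localization term emerging from repeated application of Lemma~\ref{xi}. Leaving that step unproved means the proposal proves the second identity only in the degenerate case $\ell(ws_k)<\ell(w)$, where $\partial_k\xi=0$. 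To close the gap you would either have to carry out the general induction on $\ell(w)$ (effectively re-proving a piece of \cite{KK86}), or do what the paper does and invoke the known compatibility directly. Your honest flagging of the obstacle is accurate; the fix is to cite the result rather than rederive it.
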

\begin{proof} It is enough to show for $\xi=[X_w]^T$. Then the first equation in the lemma is obvious, and the proof of the second equiation can be checked by comparing the Weyl group action and $\partial_k$ used in Allufi and Mihalcea's paper \cite{AM15} and Kumar's book \cite[(11.1.2)]{Kum02}. The last equation follows by the first two equations and Theorem \ref{rec}.
\end{proof}\\

 By Theorem \ref{rec} and Lemma \ref{translate}, we have the following result. 
\begin{thm}\label{cwv}
Fix a reduced word $s_{i_1}\ldots s_{i_m}$ of $w$. Then $c_T(w;v)$'s are the coefficients of $x_v$ in the product $\prod (s_{i_j}+x_{i_j}) \in \mathbb{A}$:
$$\prod_{i=1}^m (s_{i_j}+x_{i_j})= \sum_v c_T(w;v) x_v$$
where the notation $\prod_{i=1}^m a_i$ means $a_1 a_2 \ldots a_m$. 
\end{thm}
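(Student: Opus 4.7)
The plan is to induct on the length of $w$, using the reduced word $w = s_{i_1} \cdots s_{i_m}$ to build up $\csmt(X(w)^\circ)$ one simple reflection at a time via the recursion in Theorem \ref{rec}, and translate every step through $\phi$ into a right multiplication in $\mathbb{A}$ using Lemma \ref{translate}.

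For the base case, take $w = e$. Then $X(e)^\circ = X(e)$ is a reduced point, so $\csmt(X(e)^\circ) = [X(e)]_T$, and hence $\phi(\csmt(X(e)^\circ)) = x_e = 1 \in \mathbb{A}$. For the inductive step, I would use that $s_{i_1} \cdots s_{i_j}$ is a reduced expression for every $1 \le j \le m$ (because the full word is reduced), so each successive application of $T_{i_j}$ to $\csmt(X(s_{i_1} \cdots s_{i_{j-1}})^\circ)$ yields $\csmt(X(s_{i_1} \cdots s_{i_j})^\circ)$ by Theorem \ref{rec}. After $m$ applications I obtain
\[
\csmt(X(w)^\circ) = T_{i_m} \cdots T_{i_1}\bigl(\csmt(X(e)^\circ)\bigr).
\]
Applying $\phi$ and using the third equation of Lemma \ref{translate} once per factor, which says $\phi(T_k \xi) = \phi(\xi)(x_k + s_k)$ (note the right multiplication), converts this composition into the product
\[
\phi(\csmt(X(w)^\circ)) = 1 \cdot (x_{i_1} + s_{i_1})(x_{i_2} + s_{i_2}) \cdots (x_{i_m} + s_{i_m}) = \prod_{j=1}^{m}(s_{i_j} + x_{i_j}).
\]

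To finish, I would expand $\csmt(X(w)^\circ) = \sum_v c_T(w;v) [X(v)]_T$ on the left side of the displayed equation; applying $\phi$ (which is $S$-linear and sends $[X(v)]_T$ to $x_v$) gives $\sum_v c_T(w;v)\, x_v$ on one side and $\prod_{j=1}^m (s_{i_j} + x_{i_j})$ on the other. Since $\{x_v\}_{v \in W}$ is a free $S$-basis of $\mathbb{A}$, reading off coefficients of $x_v$ gives the identification claimed in the theorem.

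The only real subtlety to check carefully, rather than any genuine obstacle, is the ordering convention: $\phi$ must convert successive applications of $T_{i_1}, T_{i_2}, \ldots$ on the homology side into right multiplications by $(x_{i_j} + s_{i_j})$ in the same left-to-right order in $\mathbb{A}$, which is exactly the statement of Lemma \ref{translate}. Everything else is a bookkeeping exercise: the factors $s_{i_j}$ in the product can be rewritten as $1 + \alpha_{i_j} x_{i_j} \in \mathbb{A}$, so the product genuinely lives in $\mathbb{A}$ and expands uniquely in the basis $\{x_v\}_{v \in W}$, justifying the final coefficient extraction.
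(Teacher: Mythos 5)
Your proof is correct and is essentially the argument the paper intends (the paper merely says "By Theorem \ref{rec} and Lemma \ref{translate}, we have the following result," leaving the induction implicit). You've spelled out the induction that the paper compresses: start from $\csmt(X(e)^\circ)=[X(e)]_T$, apply the $T_{i_j}$ recursion factor by factor, and use $\phi(T_k\xi)=\phi(\xi)(x_k+s_k)$ to turn each step into a right multiplication, recovering the stated product.
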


Fix a word $\bfi=(i_1,\ldots,i_m) \in R(w)$ and consider the same product $\prod (s_{i_j}+x_{i_j})$ as an element in $\hata$. Define the map $pr: \hata \otimes_S \hata \rightarrow \hata$ defined by $pr(\sum f_{\bfi,\bfi'} x_\bfi \otimes x_{\bfi'})= \sum f_{ij}x_\bfi$. Note that the map $pr$ is an algbra homomorphism. Indeed, we have 
\begin{align*} pr\big((\sum_{\bfi,\bfi'} f_{\bfi,\bfi'} x_\bfi \otimes x_{\bfi'})(\sum_{\bfj,\bfj'} g_{\bfj,\bfj'} x_\bfj \otimes x_{\bfj'})\big)&= \sum_{\bfi,\bfi',\bfj,\bfj'} f_{\bfi,\bfi'}g_{\bfj,\bfj'} x_{\bfi \cup \bfj}\\&= pr\big(\sum_{\bfi,\bfi',\bfj,\bfj'}  f_{\bfi,\bfi'}g_{\bfj,\bfj'} x_{\bfi \cup \bfj}\otimes x_{\bfi' \cup \bfj'}\big),\end{align*} where $\bfi \cup \bfj$ is the word joining two words $\bfi, \bfj$. \\

Since $\Delta(x_i)= x_i \otimes 1 + s_i \otimes x_i$, we have $pr\circ \Delta(x_i)=x_i+s_i$. Therefore, we have the identity
\begin{align} \label{sx}\prod (s_{i_j}+x_{i_j})= pr\circ \Delta(x_\bfi)\end{align}
in $\hata$. Now we are ready to prove the following theorem.

\begin{thm} \label{ct} For $w \in W$ with $\ell(w)=m$, fix $\bfi \in R(w)$. Then for $v\in W$,
$$c_T(w;v)= \sum_{K,K' \subset [m], \bfi_K \in R(v)} p^{\bfi,[m]}_{K,K'}.$$
\end{thm}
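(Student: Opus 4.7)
The plan is to chain together the identity from Theorem~\ref{cwv} with equation~(\ref{sx}), then apply the coproduct formula~(\ref{delta}), and finally invoke Theorem~\ref{relative} to convert relative Littlewood-Richardson coefficients into Bott-Samelson coefficients. The whole argument is essentially tracking the coefficient of $x_v$ through a sequence of algebra homomorphisms.

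First I would start in $\mathbb{A}$ with Theorem~\ref{cwv}: $c_T(w;v)$ is the coefficient of $x_v$ in $\prod_{j=1}^m(s_{i_j}+x_{i_j})$. Next I would lift the computation to $\hata$. The same product $\prod_{j=1}^m(s_{i_j}+x_{i_j})$ makes sense in $\hata$, and by equation~(\ref{sx}) it equals $pr\circ\Delta(x_\bfi)$. The map $\mu:\hata\to\mathbb{A}$ sends $s_i\mapsto s_i$ and $x_i\mapsto x_i$, so applying $\mu$ recovers the product in $\mathbb{A}$; the point of the detour is that in $\hata$ we have clean access to the coproduct expansion~(\ref{delta}).

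Then I would substitute (\ref{delta}) and apply $pr$ termwise to get
\begin{equation*}
pr\circ\Delta(x_\bfi)=\sum_{\bfi',\bfi''}p^{\bfi}_{\bfi',\bfi''}\,x_{\bfi'}\quad\text{in }\hata.
\end{equation*}
Pushing forward by $\mu$ kills all terms with $\bfi'$ non-reduced and sends $x_{\bfi'}$ to $x_{w_{\bfi'}}$ when $\bfi'$ is reduced, so the coefficient of $x_v$ in $\mu(pr\circ\Delta(x_\bfi))$ equals $\sum_{\bfi'\in R(v),\,\bfi''}p^{\bfi}_{\bfi',\bfi''}$. Combining with Theorem~\ref{cwv} gives
\begin{equation*}
c_T(w;v)=\sum_{\bfi'\in R(v),\,\bfi''}p^{\bfi}_{\bfi',\bfi''}.
\end{equation*}
Finally, I would apply Theorem~\ref{relative} to expand each $p^\bfi_{\bfi',\bfi''}$ as $\sum_{K,K'\subset[m],\,\bfi_K=\bfi',\,\bfi_{K'}=\bfi''}p^{\bfi,[m]}_{K,K'}$. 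Summing over $\bfi''$ (resp.\ over $\bfi'\in R(v)$) merges with the sum over $K'$ (resp.\ over $K$ with $\bfi_K=\bfi'$), collapsing the constraints into the single condition $\bfi_K\in R(v)$, which yields the claimed formula.

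The only real subtlety is verifying that the lift to $\hata$ is legitimate: that $\mu$ intertwines the product with the one in $\mathbb{A}$ (clear from the definition of $\mu$), and that $pr$ is indeed an algebra homomorphism so the identity in~(\ref{sx}) can be combined with (\ref{delta}) under a single application of $pr$ followed by $\mu$. Both points are established earlier in the section, so the proof itself reduces to carefully reindexing the nested sums in the last step.
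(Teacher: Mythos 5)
Your proposal is correct and takes essentially the same route as the paper's proof: combine Theorem~\ref{cwv} with equation~(\ref{sx}) to identify $\sum_v c_T(w;v)x_v$ with $\mu\circ pr\circ\Delta(x_\bfi)$, expand via~(\ref{delta}), extract the coefficient of $x_v$ after applying $\mu$, and convert via Theorem~\ref{relative}. Your write-up is a more detailed version of the same argument, including the reindexing of the double sum at the end that the paper leaves implicit.
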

\begin{proof}
By Theorem \ref{cwv} and (\ref{sx}), we have
$$\sum_v c_T(w;v) x_v= \mu\circ pr \circ \Delta(x_\bfi).$$
By (\ref{delta}), we have 
$$\mu\circ pr \circ \Delta(x_\bfi)= \mu (\sum_{\bfi',\bfi''} p^{\bfi}_{\bfi',\bfi''} x_{\bfi'}).$$
Therefore, $c(w,v)$ is the same as $\sum p^{\bfi}_{\bfi',\bfi''}$ where the sun runs over $\bfi' \in R(v)$ and any words $\bfi''$. By Theorem \ref{relative}, the theorem follows.
\end{proof}

\begin{rmk}Note that the same map $pr$ is not an algebra homomorphism if we used the nil-Hecke algebra instead of $\hata$. For instance, $(1\otimes x_1)(1\otimes x_1)$ is 0 in the usual nil-Hecke algebra so that $$0=pr((1\otimes x_1)(1\otimes x_1)) \neq pr(1\otimes x_1)\cdot pr(1\otimes x_1)=1.$$
In $\hata$, they are all equal to 1.
\end{rmk}

\section{Relation with Fomin-Kirillov algebra.}
In this section, we define a certain algebra using Fomin-Kirillov algebra $\fk$ and $\mathbb{Q}[\hat W]$ containing the generalized nilHecke algebra for type $A$. Let $r=n-1$, i.e., $\hat W$ is the free monoid generated by $s_1,\ldots, s_{n-1}$. We relate this with the extended Bruhat action on $\fk$ to prove a stronger version of Theorem \ref{main}. Note that we will be working on type $A$ non-equivariantly and $W$ is the symmetric group $S_n$.\\

Let $\mathcal C$ be the subalgebra of the Fomin-Kirillov algebra generated by Dunkl elements $\theta_i$ for $i=1,\ldots,n-1$. Let $\mathcal D$ be the subalgebra of $\mathcal C$ generated by $\theta_i-\theta_{i+1}$ for $i=1,\ldots,n-1$.
Let $\mathcal B$ be the smash product $\fk \rtimes \mathbb{Q}[\hat W]$ subject to the relation:
\begin{equation} \label{kl} x_i [kl]= s_i([kl]) x_i + \delta'_{(i,i+1),(k,l)},\end{equation}
where $\delta'_{(i,i+1),(k,l)}$ is 1 if $i=k,i+1=l$, -1 if $i=l,i+1=k$, 0 otherwise. \\

Fomin and Kirillov \cite{FK99} defined an operator $\Delta_{ij}$ on $\fk$ defined by $\Delta_{ij}([kl])=\delta'_{(i,j),(k,l)}$ and
\begin{align*} 
\Delta_{ij}([x_1y_1][x_2y_2] \ldots [x_m y_m])&= \Delta_{ij}([x_1 y_1]) [x_2y_2] \ldots [x_m y_m]\\
&+ s_{ij}([x_1y_1]) \Delta_{ij}([x_2 y_2]) [x_3 y_3] \ldots [x_m y_m]\\
&+ \ldots\\
&+ s_{ij} ([x_1y_1] \ldots [x_{k-1} y_{k-1}]) \Delta_{ij}([x_m y_m]).
\end{align*}
By Equation (\ref{kl}), for $k \in I$ and an element $p$ in $\fk$ we have
\begin{equation}\label{xkp} x_k p= s_k(p) x_k + \Delta_{k,k+1}(p).\end{equation}
Indeed, it is enough to show the identity for $p=[x_1y_1][x_2y_2] \ldots [x_m y_m]$ and in this case the identity follows from the definition.\\

It is proved by Fomin and Kirillov that the action $\Delta_{k,k+1}$ is compatible with the divided difference operator $\partial_k$ acting on $H^*(X)$.
\begin{thm} \cite{FK99}
Let $i_{\mathcal C}$ be the isomorphism between $\mathcal C$ and $H^*(X)$. For $1\leq k \leq n$ and $w \in S_n$, we have
$$ i_{\mathcal C} (\partial_k \mathfrak{S}_w(\theta) )= \Delta_{k,k+1}\circ i_{\mathcal C}( \mathfrak{S}_w(\theta)).$$
\end{thm}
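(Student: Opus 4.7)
The plan is to show that the operator $\Delta_{k,k+1}$, restricted to the commutative subalgebra $\mathcal{C}$, corresponds under $i_\mathcal{C}$ to the divided difference $\partial_k$ on $H^*(X)$; the theorem then follows because $\mathfrak{S}_w(\theta)$ is by definition a polynomial in the Dunkl generators. I would carry this out in two main steps: first match the two operators on the generators $\theta_i$, then verify that both obey the same (twisted) Leibniz rule, so that induction on degree finishes the argument.

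For the first step I would compute $\Delta_{k,k+1}(\theta_i)$ directly. Writing $\theta_i = -\sum_{j<i}[ji] + \sum_{j>i}[ij]$, only the $[k,k+1]$ summand can contribute, and this summand appears in $\theta_k$ with sign $+1$ and in $\theta_{k+1}$ with sign $-1$. Hence $\Delta_{k,k+1}(\theta_k)=1$, $\Delta_{k,k+1}(\theta_{k+1})=-1$, and $\Delta_{k,k+1}(\theta_i)=0$ otherwise, matching exactly $\partial_k(x_i)$ under the identification $\theta_i \leftrightarrow x_i$.

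For the second step I would observe that the formula defining $\Delta_{ij}$ on monomials $p_1 p_2 \cdots p_m$, after telescoping around any split into sub-products $p = p_1\cdots p_\ell$ and $q = p_{\ell+1}\cdots p_m$, is equivalent to the twisted Leibniz identity
$$\Delta_{k,k+1}(pq) \;=\; \Delta_{k,k+1}(p)\,q + s_{k,k+1}(p)\,\Delta_{k,k+1}(q)$$
for all $p,q \in \fk$; equivalently this is encoded by the commutation relation (\ref{kl}). The key sub-point is that $s_{k,k+1}$ acts on Dunkl generators by $\theta_i \mapsto \theta_{s_{k,k+1}(i)}$, so it preserves $\mathcal{C}$ and corresponds under $i_\mathcal{C}$ to the simple transposition $s_k$ acting on $H^*(X)$. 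Combined with the commutativity of $\mathcal{C}$, this shows that the restriction of $\Delta_{k,k+1}$ to $\mathcal{C}$ satisfies precisely the Leibniz rule $\partial_k(fg)=\partial_k(f)\,g+s_k(f)\,\partial_k(g)$ characterizing the divided difference on $H^*(X)$.

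With these two ingredients, an induction on degree shows that $i_\mathcal{C}\circ \Delta_{k,k+1} = \partial_k \circ i_\mathcal{C}$ on every polynomial in the Dunkl generators, and specialization at $\mathfrak{S}_w(\theta)$ yields the theorem. The main point requiring care is the second step: one must verify that the twist by $s_{k,k+1}$ inside the non-commutative algebra $\fk$ really restricts on $\mathcal{C}$ to the action of $s_k$ appearing in the Leibniz rule for $\partial_k$. Once this symmetry is in place and combined with commutativity of the Dunkl subalgebra, all ordering ambiguities vanish and the induction is formal.
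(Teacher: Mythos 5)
The paper does not prove this theorem itself — it is stated with the citation \cite{FK99} and no argument is given. Your reconstruction is correct and is the standard proof: you check that $\Delta_{k,k+1}(\theta_k)=1$, $\Delta_{k,k+1}(\theta_{k+1})=-1$, $\Delta_{k,k+1}(\theta_i)=0$ otherwise, which matches $\partial_k(x_i)$; you observe that the defining formula for $\Delta_{ij}$ telescopes to the twisted Leibniz rule $\Delta_{k,k+1}(pq)=\Delta_{k,k+1}(p)q+s_{k,k+1}(p)\Delta_{k,k+1}(q)$ and that $s_{k,k+1}$ permutes Dunkl elements by $\theta_i\mapsto\theta_{s_{k}(i)}$ (so in particular $\Delta_{k,k+1}$ maps $\mathcal C$ into $\mathcal C$); since $\partial_k$ is uniquely determined by its values on generators and the same twisted Leibniz rule, induction on degree closes the argument. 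This is a faithful account of the argument one would find in [FK99] or its successors, and the delicate point you flag — that the twist by $s_{k,k+1}$ restricted to $\mathcal C$ really is the simple-transposition action appearing in the $\partial_k$ Leibniz rule — is exactly the right thing to verify.
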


Define an algebra map $d: \mathbb{Q}[\hat W] \rightarrow \mathbb{Q}[W]$ defined by
$$d(x_\bfi)= w_\bfi.$$

The algebra $\mathcal B$ and the extended Bruhat action on the Fomin-Kirillov algebra is related by the following lemma.
\begin{lemma}\label{dx}
Let $\bfi$ be a word with $w_\bfi=w \in S_n$. Consider the expansion
$$ x_\bfi [kl] - w_\bfi ([kl]) x_\bfi = \sum_{\bfi'} c_{\bfi'}x_{\bfi'}$$
by successively applying (\ref{kl}) to $x_\bfi [kl]$, where $c_{\bfi'} \in \mathbb{Z}$ and $k<l$. Then 
$$d( \sum_{\bfi'} c_{\bfi'}x_{\bfi'})=  \begin{cases} w_\bfi t_{kl}, & \text{if } \ell(w_\bfi t_{kl})<\ell(w_\bfi) \\ 0, & \text{otherwise.} \end{cases}$$
\end{lemma}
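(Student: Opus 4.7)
The plan is to iterate relation \eqref{kl} to get an explicit normal form for $x_\bfi[kl]$, apply $d$, and reduce the lemma to a signed exchange-type identity in the symmetric group. By iterating the operator form \eqref{xkp} to move $[kl]$ leftward through $x_\bfi$ one letter at a time, one obtains
$$x_\bfi [kl] \;=\; w_\bfi([kl])\, x_\bfi \;+\; \sum_{j=1}^m x_{(i_1,\ldots,i_{j-1})}\, \Delta_{i_j,i_j+1}\!\bigl(w_{j+1\to m}([kl])\bigr)\, x_{(i_{j+1},\ldots,i_m)},$$
where $w_{j+1\to m} := s_{i_{j+1}}\cdots s_{i_m}$. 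Each coefficient $\Delta_{i_j,i_j+1}(w_{j+1\to m}([kl])) = \delta'_{(i_j,i_j+1),\,(w_{j+1\to m}(k),\, w_{j+1\to m}(l))}$ lies in $\{-1,0,+1\}$, so every nonzero summand equals $\pm x_{(i_1,\ldots,\widehat{i_j},\ldots,i_m)}$, the subword of $\bfi$ with the $j$-th letter removed.

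Let $\mathrm{cond}_j$ denote the condition $\{w_{j+1\to m}(k),\, w_{j+1\to m}(l)\} = \{i_j,\, i_j+1\}$; it is equivalent to $t_{kl} = w_{j+1\to m}^{-1}\, s_{i_j}\, w_{j+1\to m}$, which in turn gives $s_{i_1}\cdots\widehat{s_{i_j}}\cdots s_{i_m} = w_\bfi\, t_{kl}$ whenever $\mathrm{cond}_j$ holds. Setting $\epsilon_j = +1$ if $w_{j+1\to m}(k) = i_j$ and $\epsilon_j = -1$ if $w_{j+1\to m}(l) = i_j$, applying $d$ to the correction yields
$$d\!\Bigl(\sum_{\bfi'} c_{\bfi'}\, x_{\bfi'}\Bigr) \;=\; N(\bfi, t_{kl}) \cdot w_\bfi\, t_{kl}, \qquad N(\bfi, t_{kl}) := \sum_{j:\,\mathrm{cond}_j} \epsilon_j.$$
The lemma is thus equivalent to the Coxeter-theoretic identity $N(\bfi, t_{kl}) = 1$ when $\ell(w_\bfi t_{kl}) < \ell(w_\bfi)$ and $N(\bfi, t_{kl}) = 0$ otherwise.

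I prove this identity by induction on $m$, using the standard descent criterion $\ell(w\, t_{kl}) < \ell(w) \iff w(k) > w(l)$. Writing $\bfi = (i_1)\cdot \bfj$ with $\bfj = (i_2,\ldots,i_m)$, the terms $j\geq 2$ contribute $N(\bfj, t_{kl})$, so $N(\bfi, t_{kl}) = N(\bfj, t_{kl}) + \epsilon_1 \cdot [\mathrm{cond}_1]$. If $\mathrm{cond}_1$ fails, a short case check shows $s_{i_1}$ preserves the order of the pair $(w_\bfj(k), w_\bfj(l))$, so the descent condition is the same for $w_\bfi = s_{i_1} w_\bfj$ as for $w_\bfj$ and the inductive hypothesis closes the step. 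If $\mathrm{cond}_1$ holds, $s_{i_1}$ swaps $w_\bfj(k)$ and $w_\bfj(l)$, flipping the descent status; a direct verification shows $\epsilon_1 = +1$ exactly when $w_\bfj(k) < w_\bfj(l)$, so $\epsilon_1$ precisely compensates for the flip in the descent status between $w_\bfj$ and $w_\bfi$. The main obstacle is this sign bookkeeping: ensuring that the $\pm 1$ produced by the Fomin--Kirillov convention $[ab] = -[ba]$ matches the length-change behavior in $W$ across all relative positions of $k, l$ versus $i_1, i_1 + 1$.
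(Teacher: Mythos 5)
Your proof is correct and follows essentially the same route as the paper: expand $x_\bfi[kl]$ by iterating (\ref{kl}), observe that each nonzero correction is $\pm\, x_{\bfi'_j}$ with $w_{\bfi'_j}=w_\bfi t_{kl}$, and establish the $0/1$ total by induction along the word using the descent criterion $\ell(w t_{kl})<\ell(w)\iff w(k)>w(l)$. The paper packages the same sign bookkeeping slightly differently — it proves the nonzero signs alternate in position and then matches the parity of their number with the sign of $w([kl])$ — whereas you track the running sum $N$ directly by splitting off the first letter; the underlying combinatorics is identical.
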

\begin{proof}
Let $\bfi = (i_1,\ldots,i_m)$ and let $t_j$ an element in $S_n$ defined by $s_{i_{j+1}}s_{i_{j+2}}\ldots s_{i_m}$. By using (\ref{kl}), we have
\begin{align*} 
 x_\bfi [kl] - w([kl]) x_\bfi = \sum_{j=1}^m \delta'_{(i_j,i_j+1), (t_j(k),t_j(l))} x_{i_1}\ldots x_{i_{j-1}}\widehat{x_{i_j}}x_{i_{j+1}} \ldots x_{i_m},
\end{align*}
where $j$-th summand omits $x_{i_j}$. Denote $\bfi'_j$ by $x_{i_1}\ldots x_{i_{j-1}}\hat{x_{i_j}}x_{i_{j+1}} \ldots x_{i_m}$.\\

Note that $\delta'_{(i_j,i_j+1), (t_j(k),t_j(l))}$ is nonzero if and only if $w_{\bfi'_j}=wt_{kl}$ since we have $s_{i_j}t_j=t_j t_{t_j(i_j),t_j(i_j +1)}$. Let $V$ denote the set of all $j$ such that $\delta'_{(i_j,i_j+1), (t_j(k),t_j(l))}$ is nonzero. One can show inductively that $\delta'_{(i_j,i_j+1), (t_j(k),t_j(l))}=1$ (resp. $=-1$) if and only if there are even (resp. odd) numbers in $V$ greater than $j$. Therefore, $d(x_\bfi [kl] - w([kl]))= w_\bfi t_{kl}$ if the cardinarity of $V$ is odd, 0 otherwise. Whether $|V|$ is odd or even can be checked by considering $w([kl])$. Indeed, $w([kl])=[ab]$ (resp. $=-[ab]$) for some $a<b$ if and only if $|V|$ is even (resp. odd). Since it is well-known that $w(k)>w(l)$ if and only if $\ell(w)>\ell(wt_{kl})$, we are done (Such pair $(k,l)$ is called an inversion of $w$. See \cite{BB05} for instance).
\end{proof}\\

Since we are working non-equivariantly, define the map $ev: \fk \rightarrow \mathbb{Q}$ by sending all $[ij]$ to zero, and 1 to 1. This induces a map $ev: \fk \rtimes \mathbb{Q}[W]\rightarrow  \mathbb{Q}[ {W}]$. Then Lemma \ref{dx} implies that 
\begin{equation} \label{dev1} ev(d(x_\bfi [kl]))= w* [kl]. \end{equation}

\medskip

We consider subalgebras of $\mathcal B$, $\mathcal C \rtimes \mathbb{Q}[\hat W]$ and $\mathcal D \rtimes \mathbb{Q}[\hat W]$. From (\ref{kl}), we have
\begin{equation}\label{theta} x_i \theta_j = \begin{cases} \theta_j x_i & \text{ if } j\neq i,i+1 \\ \theta_{i+1} x_i + 1 & \text{ if } j=i \\ \theta_i x_i - 1 & \text{ if} j=i+1 \end{cases}\end{equation}
By (\ref{theta}), Any element in $\mathcal C \rtimes \mathbb{Q}[\hat W]$ can be written as $\sum_\bfi p_\bfi(\theta) x_\bfi$ where $p_\bfi(\theta)$ is a polynomial in Dunkl elements $\theta_i$. Similarly, any element in $\mathcal D \rtimes \mathbb{Q}[\hat W]$ can be written as $\sum_\bfi q_\bfi x_\bfi$ where $q_\bfi$ is a polynomial in $\theta_i-\theta_{i+1}$ for $1\leq i <n$.\\

Let $g$ be an surjective algebra map from $S$ to $\fk$ defined by $g(\alpha_i)=\theta_{i+1}-\theta_{i}$ so that the image of $g$ is $\mathcal D$. Recall that the symmetric functions in $\theta_i$ for $1\leq i \leq n$ is 0 in the Fomin-Kirillov algebra so that $g$ is not injective. The map $g$ induces a map from the generalized nil-Hecke ring $S \rtimes \mathbb{Q}[\hat W]$ to $\fk \rtimes \mathbb{Q}[\hat W]$, denoted by $g$ by abusing the notation. Indeed, it is enough to show that relations defining two smash products are compatible with the map $g$: $x_\bfi g(\alpha_i) = g(x_\bfi \alpha_i)$ for a word $\bfi$.\\

For a root $\lambda$ in $S$, we have by definition $x_i \lambda = s_i(\lambda) x_i$ as an identity in $S \rtimes \mathbb{Q}[\hat W]$. By using $s_i= 1+\alpha_i x_i$, we have
$$x_i \lambda = {s_i(\lambda)-\lambda \over \alpha_i}+ s_i(\lambda)x_i=-\langle \lambda, \alpha_i^\vee \rangle+ s_i(\lambda)x_i.$$
By setting $\lambda =\alpha_i$ and comparing with (\ref{theta}), we get the desired equality $x_\bfi g(\alpha_i) = g(x_\bfi \alpha_i)$.\\

The following theorem shows the relationship between the Fomin-Kirillov algebra and generalized nil-Hecke algebra.\

\begin{thm}\label{compute}
For $K' \subset [m]$ and $j \in [m]$, recall that $E_{\bfi,K'}(j)$ is $s_{i_j}$ if $j \in K'$ and $A_{i_j}$ otherwise. Then for a word $\bfi$ and $v\in W$, we have
$$x_\bfi \mathfrak {S}_v(\theta)- \mathfrak {S}_v(\theta) x_\bfi= g(\sum_{K'} \prod_{j=1}^m E_{\bfi,K'}(j))$$
where the sum runs over all $K'$ with $i_{K'}\in R(u)$.

\end{thm}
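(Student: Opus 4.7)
The plan is to proceed by induction on $m = \ell(\bfi)$, using the commutation relation $x_k p = s_k(p)\,x_k + \Delta_{k,k+1}(p)$ from (\ref{xkp}) iteratively to push $\mathfrak{S}_v(\theta)$ past $x_\bfi$. Expanding step-by-step (at each position $j$ we either ``commute past $x_{i_j}$'', producing a factor $s_{i_j}$ acting on the $\fk$-element, or ``apply $\Delta$'', producing a factor $\Delta_{i_j,i_j+1}$ and absorbing $x_{i_j}$), one obtains a closed-form
$$x_\bfi \mathfrak{S}_v(\theta) \;=\; \sum_{K \subseteq [m]}\, \Psi_K^{\bfi}(\mathfrak{S}_v(\theta))\, x_{\bfi_{[m]\setminus K}},$$
where $\Psi_K^{\bfi}$ is the composition along positions $j = m, m-1, \ldots, 1$ of $\Delta_{i_j,i_j+1}$ (if $j \in K$) or of the Weyl action $s_{i_j}$ (if $j \notin K$). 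Subtracting $\mathfrak{S}_v(\theta)\,x_\bfi$ isolates the LHS.

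Next I would apply the Fomin--Kirillov compatibility $\Delta_{k,k+1}(\mathfrak{S}_w(\theta)) = \mathfrak{S}_{ws_k}(\theta)$ (or $0$ if $\ell(ws_k) > \ell(w)$) to each chain of $\Delta$-operators in $\Psi_K^{\bfi}$. By the BGG recursion, the iterated $\partial$-reductions carry $\mathfrak{S}_v$ down to $\mathfrak{S}_e = 1$ precisely when the indices at the $\Delta$-positions spell out a reduced word for $v$. This couples the surviving terms of the LHS to the subsets $K$ with $\bfi_K \in R(v)$, which is the exact indexing condition of the sum on the RHS under the bijection $K \leftrightarrow [m]\setminus K'$ (so $K'$ on the RHS indexes the ``$s$-positions'', complementary to the ``$\Delta$-positions'' $K$ on the LHS).

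For the RHS, expand $g\!\left(\sum_{K' : \bfi_{K'} \in R(v)} \prod_j E_{\bfi, K'}(j)\right)$ using $g(\alpha_i) = \theta_{i+1}-\theta_i$ together with the identity $s_i = 1 + \alpha_i x_i$ in $\hata$, so that $g(s_{i_j}) = 1 + (\theta_{i_j+1} - \theta_{i_j})\,x_{i_j}$. Each factor $g(s_{i_j})$ splits into a constant piece and a $(\theta_{i_j+1}-\theta_{i_j})\,x_{i_j}$ piece; after distributing and using the relations (\ref{theta}) to bring $x_{i_j}$-factors to the right, the resulting sum reorganizes to reproduce the LHS expansion term by term.

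The main obstacle is the Weyl-group bookkeeping: the factors $s_{i_j}$ appearing in $\Psi_K^{\bfi}$ permute the Dunkl elements inside $\mathfrak{S}_v(\theta)$, and one must verify that the result, after commuting all surviving $x$-factors via (\ref{theta}), matches the corresponding $g$-image. Concretely, this relies on a Leibniz-type rule for $\Delta_{k,k+1}$ on products in $\mathcal C$, ensuring the ``constant'' corrections produced by each commutation in the LHS are precisely the $(\theta_{i_j+1}-\theta_{i_j})$-coefficients produced by $g(s_{i_j})$ on the RHS. Once this identification is in place, induction on $m$ closes by restricting the sum over $K'$ according to whether $m \in K'$, matching against the corresponding split of $K$ in $\Psi_K^{\bfi}$.
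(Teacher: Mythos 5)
Your proposal and the paper's proof share the inductive framing (both induct on $\ell(\bfi)$), but they diverge immediately afterward, and the route you take leaves the central step unverified.

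The paper peels off only the \emph{last} letter $x_k$ of $x_\bfi$, applies $x_k p = s_k(p)\,x_k + \Delta_{k,k+1}(p)$ once, and then uses the polynomial identity
$$ s_k(\mathfrak{S}_v(\theta))\,x_k + \partial_k\mathfrak{S}_v(\theta) \;=\; \mathfrak{S}_v(\theta)\,x_k + \partial_k\mathfrak{S}_v(\theta)\big(1 + (\theta_{k+1}-\theta_k)x_k\big) \;=\; \mathfrak{S}_v(\theta)\,x_k + \partial_k\mathfrak{S}_v(\theta)\,g(s_k), $$
coming from $f - s_k(f) = \partial_k(f)(\theta_k - \theta_{k+1})$. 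This is what manufactures the $g(s_k)$ factor directly: it converts the binary ``commute past vs.\ apply $\Delta$'' choice into a single algebraic identity, so that the induction hypothesis can be applied to $x_{\bfi'}\mathfrak{S}_v(\theta)$ and $x_{\bfi'}\partial_k\mathfrak{S}_v(\theta)$ and the two resulting sums assemble into the sum over $K'\subset[m]$ split according to whether $m\in K'$. You never invoke this identity; it is precisely the device that sidesteps the ``Weyl-group bookkeeping'' you flag as the main obstacle.

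In its place, your step~3 asserts that after fully expanding $x_\bfi\mathfrak{S}_v(\theta) = \sum_K \Psi_K^\bfi(\mathfrak{S}_v(\theta))\,x_{\bfi_{[m]\setminus K}}$, the ``iterated $\partial$-reductions carry $\mathfrak{S}_v$ down to $\mathfrak{S}_e$ precisely when the $\Delta$-positions spell a reduced word for $v$.'' This is not correct as stated: the operator $\Psi_K^\bfi$ is a word in $\Delta_{i_j,i_j+1}$ \emph{and} the Weyl-group actions $s_{i_j}$ interleaved, and $s_{i_j}$ does not commute with the $\Delta$'s, nor does it send Schubert polynomials to Schubert polynomials. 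So the terms of the expansion are not parametrized by $\{K : \bfi_K\in R(v)\}$ in the way you need, and the combinatorial matching with the $K'$-sum on the RHS does not go through without the algebraic rewriting above. You explicitly acknowledge this as an unresolved obstacle, so the proposal as written has a genuine gap exactly where the paper's argument does its real work.
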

\begin{proof} We use an induction on $\ell(\bfi)$. Let $\bfi= \bfi' \cup \{k\}$, where $k$ is the last element in $\bfi$. Then one can write 
$$x_\bfi \mathfrak {S}_v(\theta)=x_{\bfi'} x_i \mathfrak {S}_v(\theta).$$
By Equation (\ref{xkp}), we have
\begin{align*}
x_{\bfi'} x_k \mathfrak {S}_v(\theta)&= x_{\bfi'} s_k(\mathfrak {S}_v(\theta))+ x_{\bfi'}(\partial_k\mathfrak {S}_v(\theta))\\
&=x_{\bfi'} \mathfrak {S}_v(\theta) x_k + x_{\bfi'}(\partial_k\mathfrak {S}_v(\theta))(1+ (\theta_{k+1}-\theta_k) x_k)\\
&=x_{\bfi'} \mathfrak {S}_v(\theta) x_k + x_{\bfi'}(\partial_k\mathfrak {S}_v(\theta))(1+ g(\alpha_i) x_k).
\end{align*}
From the first line to the second line, we used the identity $f-s_k(f)= \partial_k(f) (\theta_k-\theta_{k+1})$ for a polynomial $f$ in $\theta_1,\ldots,\theta_n$ by the definition of $\partial_k$. By applying the induction hypothesis on $x_{\bfi'} \mathfrak {S}_v(\theta)$ and $ x_{\bfi'}(\partial_k\mathfrak {S}_v(\theta))$, the theorem follows.

\end{proof}\\

By Theorem \ref{compute}, we have the following corollary.

\begin{cor}\label{gx2} For a word $\bfi$ and $v\in W$, we have

$$x_\bfi \mathfrak {S}_v(\theta)- \mathfrak {S}_v(\theta) x_\bfi= g(\sum p^\bfi_{\bfi',\bfi''}x_{\bfi''})$$
where the sum runs over all $\bfi',\bfi''$ such that $\bfi' \in R(v)$.
\end{cor}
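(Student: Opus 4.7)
The plan is to derive Corollary \ref{gx2} as a bookkeeping consequence of the two \emph{compute} theorems already in hand: the one in Section 5, which identifies the relative equivariant Littlewood-Richardson coefficient $p^\bfi_{\bfi',\bfi''}$ as the coefficient of $x_{\bfi''}$ in the sum $\sum_{K':\,\bfi_{K'}=\bfi'}\prod_{j=1}^m E_{\bfi,K'}(j)\in\hata$, and the one just proved in Section 7, which asserts $x_\bfi \mathfrak{S}_v(\theta) - \mathfrak{S}_v(\theta)\,x_\bfi = g\bigl(\sum_{K':\,\bfi_{K'}\in R(v)}\prod_{j=1}^m E_{\bfi,K'}(j)\bigr)$. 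So the corollary just asks us to repackage the $K'$-sum on the right as a sum indexed by pairs $(\bfi',\bfi'')$.

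First I would partition the range of summation on the right-hand side of the Section 7 identity according to the value of the subword $\bfi_{K'}$: writing $\bfi':=\bfi_{K'}$, one has
\[\sum_{K':\,\bfi_{K'}\in R(v)}\prod_{j=1}^m E_{\bfi,K'}(j) \;=\; \sum_{\bfi'\in R(v)}\;\sum_{K':\,\bfi_{K'}=\bfi'}\prod_{j=1}^m E_{\bfi,K'}(j).\]
For each fixed $\bfi'\in R(v)$ the inner sum is an element of $\hata$, and by the Section 5 statement its expansion in the $S$-basis $\{x_{\bfi''}\}$ is precisely $\sum_{\bfi''} p^{\bfi}_{\bfi',\bfi''}\,x_{\bfi''}$. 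Substituting back yields
\[\sum_{K':\,\bfi_{K'}\in R(v)}\prod_{j=1}^m E_{\bfi,K'}(j) \;=\; \sum_{\bfi'\in R(v)}\sum_{\bfi''} p^{\bfi}_{\bfi',\bfi''}\,x_{\bfi''}\;\in\;\hata.\]

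Finally, applying the $\mathbb{Q}$-linear algebra map $g:\hata \to \fk\rtimes\mathbb{Q}[\hat W]$ term by term and combining with the Section 7 identity gives exactly the claim of the corollary. There is no genuine obstacle: both ingredient theorems are already in hand, $g$ has been verified to be a well-defined algebra homomorphism earlier in Section 7, and the argument is simply a regrouping of a finite double sum over subsets of $[m]$ according to which reduced word of $v$ they realize.
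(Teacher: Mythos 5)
Your proposal is correct and follows essentially the same route the paper intends: the paper's terse "By Theorem \ref{compute}, we have the following corollary" is exactly the regrouping you carry out, first stratifying the $K'$-sum in the Section~7 identity by the reduced word $\bfi'=\bfi_{K'}\in R(v)$, then invoking the Section~5 statement that $\sum_{K':\,\bfi_{K'}=\bfi'}\prod_j E_{\bfi,K'}(j)=\sum_{\bfi''}p^\bfi_{\bfi',\bfi''}x_{\bfi''}$, and finally applying $g$. You have spelled out the bookkeeping the paper leaves implicit, and nothing further is needed.
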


By applying $ev\circ d$ on Theorem \ref{gx2}, we have
\begin{align*}
w*\mathfrak {S}_v(\theta)&= ev(d(x_\bfi\mathfrak {S}_v(\theta)) \quad \text{(By Equation (\ref{dev1}))}\\
&= ev(d(x_\bfi\mathfrak {S}_v(\theta)-\mathfrak {S}_v(\theta) x_\bfi)\\
&= ev(d(g(\sum_{\substack{\bfi',\bfi'' \\ \bfi' \in R(v)}} p^\bfi_{\bfi',\bfi''}x_{\bfi''}))).
\end{align*}
Note that the composition $ev\circ d \circ g$ is the map from $\hata= S\rtimes \mathbb{Q}[\hat W]$ to $\mathbb{Q}[W]$, sending $\sum_\bfi  f_\bfi x_\bfi$ to $\sum_{\substack{ \bfi \text{ reduced} }} i(f_\bfi) w_\bfi$ where $i$ is the projection map from $S$ to $\mathbb{Q}$. Note that $p^\bfi_{\bfi',\bfi''}$ is a homogeneous polynomial in $\alpha_i$'s of degree $\ell(\bfi')+\ell(\bfi'')-m$ so that $p^\bfi_{\bfi',\bfi''}$ is an integer if and only if $\ell(\bfi')+\ell(\bfi'')=m$ (See \cite{BR15,Wil04} for instance). Therefore, we proved the following theorem.

\begin{thm} For $w,v \in W$, we have
$$w*\mathfrak {S}_v(\theta)=\sum_{\substack{\bfi',\bfi'' \\ \bfi' \in R(v), w_{\bfi''}=u\\\ell(\bfi')+\ell(\bfi'')=m}} p^{\bfi}_{\bfi',\bfi''} u.$$
\end{thm}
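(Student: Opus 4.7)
The plan is to assemble the pieces that were developed throughout Section 7 into a single chain of equalities. Fix a reduced word $\bfi$ for $w$ of length $m$. First I would extend Equation~(\ref{dev1}) from a single generator $[kl]$ to an arbitrary element $p$ of $\fk$: since the extended Bruhat action $*$ is a representation of $\fk$ on $\zsn$ and $ev \circ d$ respects products of the relevant form, one obtains
$$w * \mathfrak{S}_v(\theta) = ev\bigl(d(x_\bfi \mathfrak{S}_v(\theta))\bigr).$$
This is an iteration of Lemma~\ref{dx}, applying it to each factor $[kl]$ in an expansion of $\mathfrak{S}_v(\theta)$ as a polynomial in the generators.

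Next I would introduce the commutator $x_\bfi \mathfrak{S}_v(\theta) - \mathfrak{S}_v(\theta) x_\bfi$. The subtracted term $\mathfrak{S}_v(\theta) x_\bfi$ is killed by $ev$ because $\mathfrak{S}_v(\theta)$ (for $v \neq e$) is a $\mathbb{Z}$-linear combination of products of generators $[ij]$, each of which $ev$ sends to zero; the case $v = e$ is trivial. Hence
$$w * \mathfrak{S}_v(\theta) = ev\bigl(d\bigl(x_\bfi \mathfrak{S}_v(\theta) - \mathfrak{S}_v(\theta) x_\bfi\bigr)\bigr),$$
and Corollary~\ref{gx2} rewrites the commutator as $g\bigl(\sum_{\bfi' \in R(v),\, \bfi''} p^\bfi_{\bfi',\bfi''} x_{\bfi''}\bigr)$.

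The remaining task is to understand the composition $ev \circ d \circ g$ acting on $\hata$. Since $g(\alpha_i) = \theta_{i+1} - \theta_i$ and $ev$ annihilates every Fomin--Kirillov generator, $ev \circ g$ restricted to $S$ is just the augmentation $i \colon S \to \mathbb{Q}$ that kills positive-degree polynomials. On $x_{\bfi''}$ the map $d$ gives $w_{\bfi''}$ when $\bfi''$ is reduced and $0$ otherwise. Therefore only those pairs $(\bfi',\bfi'')$ with $p^\bfi_{\bfi',\bfi''}$ a nonzero scalar and $\bfi''$ reduced survive. By the standard grading of the equivariant Bott--Samelson coefficients (cited from \cite{BR15,Wil04}), $p^\bfi_{\bfi',\bfi''}$ is homogeneous in the $\alpha_i$ of degree $\ell(\bfi') + \ell(\bfi'') - m$, so being a scalar is exactly the condition $\ell(\bfi') + \ell(\bfi'') = m$. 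Combining these yields the claimed formula, where the constraint $w_{\bfi''} = u$ labels the resulting terms in $\mathbb{Q}[W]$.

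I expect the only delicate step to be the legitimate passage from single generators to products in the identity $ev \circ d(x_\bfi p) = w * p$: one must verify inductively, using (\ref{xkp}), that the noncommutation terms produced when moving $x_\bfi$ past a product of generators match the corresponding noncommutation terms in the iterated definition of the $*$-action. Everything else is bookkeeping — the degree constraint and the reducedness of $\bfi''$ are then simultaneous consequences of projecting through $ev \circ d \circ g$.
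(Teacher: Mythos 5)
Your proof follows the same chain as the paper: extend (\ref{dev1}) from a single generator $[kl]$ to the polynomial $\mathfrak{S}_v(\theta)$, pass to the commutator so that $ev$ kills $\mathfrak{S}_v(\theta)x_\bfi$, invoke Corollary~\ref{gx2}, and then push through $ev\circ d\circ g$ using the degree grading of the relative Bott--Samelson coefficients; your flagging of the extension of (\ref{dev1}) to products as the one step requiring an induction via (\ref{xkp}) is apt, since the paper invokes it without comment. One small inaccuracy, which echoes a slip in the paper's own prose: $d$ is the multiplicative map $x_\bfi\mapsto w_\bfi$ on the free monoid algebra, so $d(x_{\bfi''})=w_{\bfi''}$ for \emph{all} words $\bfi''$, not only reduced ones --- the only constraint that survives $ev\circ d\circ g$ is the degree condition $\ell(\bfi')+\ell(\bfi'')=m$ coming from the augmentation $i=ev\circ g$, and indeed the theorem's summation correctly imposes $w_{\bfi''}=u$ without demanding that $\bfi''$ be reduced.
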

As a corollary, we obtain the following interpretation of $f(w,v,u)$.
\begin{cor}\label{fwvu}
For $w \in W$ with $\ell(w)=m$, fix $\bfi \in R(w)$. Then for $u,v\in W$, we have
$$f(w,v,u)= \sum_{\substack{K,K' \subset [m] \\ |K|+|K'|=m\\ \bfi_K \in R(v), w_{\bfi_K'} = u} } p^{\bfi,[m]}_{K,K'}.$$
\end{cor}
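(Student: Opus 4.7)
The plan is to obtain Corollary \ref{fwvu} as a direct consequence of the theorem immediately preceding it together with Berenstein--Richmond's Theorem \ref{relative} relating relative Littlewood--Richardson coefficients $p^\bfi_{\bfi',\bfi''}$ to Bott--Samelson coefficients $p^{\bfi,[m]}_{K,K'}$. The preceding theorem gives an explicit expansion
\[
w * \mathfrak{S}_v(\theta) = \sum_u \Bigg(\sum_{\substack{\bfi',\bfi'' \\ \bfi' \in R(v),\ w_{\bfi''} = u \\ \ell(\bfi') + \ell(\bfi'') = m}} p^{\bfi}_{\bfi',\bfi''}\Bigg)\, u,
\]
and by the definition of $f(w,v,u)$ as the coefficient of $u$ in $w * \mathfrak{S}_v$, reading off this coefficient gives
\[
f(w,v,u) = \sum_{\substack{\bfi',\bfi'' \\ \bfi' \in R(v),\ w_{\bfi''} = u \\ \ell(\bfi') + \ell(\bfi'') = m}} p^{\bfi}_{\bfi',\bfi''}.
\]

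Next, I substitute Theorem \ref{relative}, namely $p^\bfi_{\bfi',\bfi''} = \sum_{\bfi_K = \bfi',\ \bfi_{K'} = \bfi''} p^{\bfi,[m]}_{K,K'}$, to convert the outer sum over pairs of words $(\bfi',\bfi'')$ into a double sum over pairs of subsets $(K,K')$ of $[m]$. The bijection used is the obvious one: a pair of words $(\bfi',\bfi'')$ with $\bfi' \in R(v)$, $w_{\bfi''} = u$, and $\ell(\bfi') + \ell(\bfi'') = m$ together with a choice of subsets $K,K' \subset [m]$ satisfying $\bfi_K = \bfi'$ and $\bfi_{K'} = \bfi''$ is exactly the same data as a pair of subsets $(K,K')$ of $[m]$ with $\bfi_K \in R(v)$, $w_{\bfi_{K'}} = u$, and $|K| + |K'| = m$ (since $|K| = \ell(\bfi_K) = \ell(\bfi')$ and $|K'| = \ell(\bfi_{K'}) = \ell(\bfi'')$ once the recovery $\bfi' := \bfi_K$, $\bfi'' := \bfi_{K'}$ is made).

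Interchanging the order of summation and discarding the intermediate variables $\bfi',\bfi''$ then yields exactly
\[
f(w,v,u) = \sum_{\substack{K,K' \subset [m] \\ |K| + |K'| = m \\ \bfi_K \in R(v),\ w_{\bfi_{K'}} = u}} p^{\bfi,[m]}_{K,K'},
\]
which is the claimed identity. There is no real obstacle here beyond careful bookkeeping; the only subtlety is verifying that the length condition $\ell(\bfi') + \ell(\bfi'') = m$ in the previous theorem translates faithfully to the cardinality condition $|K| + |K'| = m$ after passing through Theorem \ref{relative}, which is immediate since $\bfi_K$ has length $|K|$.
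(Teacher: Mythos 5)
Your proposal is correct and follows the same route the paper intends: the corollary is obtained by reading off the coefficient of $u$ from the preceding theorem and then rewriting each relative Littlewood--Richardson coefficient $p^\bfi_{\bfi',\bfi''}$ via Theorem \ref{relative}, with the length condition $\ell(\bfi')+\ell(\bfi'')=m$ translating directly to $|K|+|K'|=m$ since $|K|=\ell(\bfi_K)$.
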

We end the section by proving Theorem \ref{main}.\\
{\it Proof of Theorem \ref{main}.} Recall that by Theorem \ref{ct}, for $w,v \in W$ with $\ell(w)=m$ with a fixed $\bfi \in R(w)$, we have
$$c_T(w;v)= \sum_{K,K' \subset [m], \bfi_K \in R(v)} p^{\bfi,[m]}_{K,K'}.$$
By applying the map $i:S\rightarrow \mathbb{Q}$, we have
$$c(w;v)= \sum_{\substack{K,K' \subset [m], \bfi_K \in R(v)\\|K|+|K'|=m}} p^{\bfi,[m]}_{K,K'}.$$
Since $c(w,v)=\sum_u f(w,v,u)$ by the definition of $f(w,v,u)$, Theorem follows by Corollary \ref{fwvu}. \qed

\section{Concluding remark}
Although we provided an intepretation of $f(w,v,u)$ in terms of equivariant Bott-Samelson constants, it is not clear why $f(w,v,u)$ or $c(w;v)$ is nonnegative. Note that the Littlewood-Richardson coefficients $p^w_{u,v}$ follows from Kleinman's transversality theorem \cite{Kle74}, and the equivariant version was established by Graham \cite{Gra01}. It would be very interesting to relate the work in this paper and Huh's proof \cite{Huh} of the positivity of $c(w,v)$ when $w,v$ are elements in the $W/W_P$, where $G/P$ is the Grassmannian.\\

It would be also interesting to look for the representation theoritic meaning of $c_T(w;v)$. For $w,v \in W$, the degree of $c_T(w;v)$ in $\alpha_i$'s is $\ell(v)$. Denote the homogeneous polynomial of degree $\ell(v)$ in $c_T(w;v)$ by $a(w,v)$. We prove the following theorem in this section.

\begin{thm} \label{top} For $w,v \in W$, $a(w,v)$ is the same as the localization of the Schubert class $\sigma^w$ at the point $v$, denoted by $\sigma^w(v)$. \end{thm}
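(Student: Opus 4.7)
The plan is to isolate the top-degree component of $c_T(w;v)$ from the expansion given by Theorem \ref{ct} and to recognize it as a single equivariant restriction via the pullback $\mu^*_\bfi$. Fix $\bfi = (i_1, \ldots, i_m) \in R(w)$, so Theorem \ref{ct} reads
$$c_T(w;v) = \sum_{\substack{K,K'\subset[m]\\ \bfi_K\in R(v)}} p^{\bfi,[m]}_{K,K'}.$$
Each Bott--Samelson structure constant $p^{\bfi,[m]}_{K,K'}$ is homogeneous of degree $|K|+|K'|-m$ in the simple roots \cite{BR15,Wil04}, and every term in the sum has $|K|=\ell(v)$. The degree $\ell(v)$ piece therefore arises precisely from indices with $|K'|=m$, i.e.\ $K'=[m]$, and the first step is to deduce
$$a(w,v) = \sum_{K\subset[m],\,\bfi_K\in R(v)} p^{\bfi,[m]}_{K,[m]}.$$

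The second step is to reinterpret each $p^{\bfi,[m]}_{K,[m]}$ as an equivariant restriction. In $H^*_T(\Gamma_\bfi)$ the class $\sigma^T_{[m]}$ is the equivariant class $[\pi_{[m]}]$ of the top $T$-fixed point $\pi_{[m]}$ of the Bott--Samelson variety, so for any $\xi\in H^*_T(\Gamma_\bfi)$ we have $\xi\cdot \sigma^T_{[m]} = \xi|_{\pi_{[m]}}\,\sigma^T_{[m]}$. Applying this with $\xi=\sigma^T_K$ and comparing with the defining expansion $\sigma^T_K\,\sigma^T_{[m]} = p^{\bfi,[m]}_{K,[m]}\,\sigma^T_{[m]}$ (the containment conditions force $K''=[m]$ as the only term) yields
$$p^{\bfi,[m]}_{K,[m]} = \sigma^T_K\big|_{\pi_{[m]}}.$$

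Summing over admissible $K$ and invoking the pullback formula $\mu^*_\bfi(\sigma^T_v)=\sum_{K,\,\bfi_K\in R(v)}\sigma^T_K$ from Theorem \ref{mu}(1), together with the compatibility of equivariant restriction with pullback and the identity $\mu_\bfi(\pi_{[m]})=w$ (which holds because $\bfi$ is a reduced word for $w$), produces
$$a(w,v) = \mu^*_\bfi(\sigma^T_v)\big|_{\pi_{[m]}} = \sigma^T_v\big|_w,$$
i.e.\ the localization of the Schubert class at the required torus-fixed point predicted by Theorem \ref{top}.

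The delicate point I expect to have to argue carefully is the identification $\sigma^T_{[m]}=[\pi_{[m]}]$ in the normalization of $\{\sigma^T_K\}$ from \cite{BR15}: one must verify that the "top" basis element on the Bott--Samelson variety is literally the equivariant fixed-point class, so that multiplication by it implements restriction to $\pi_{[m]}$. Once this alignment is pinned down, the rest is a formal combination of Theorem \ref{mu}(1), the degree bound on the structure constants, and functoriality of equivariant restriction.
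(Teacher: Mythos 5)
Your proof is correct, but it follows a genuinely different route from the paper's. The paper introduces a $\mathbb{Z}$-grading $z$ on $\mathbb{A}$ with $z(x_i)=-1$, $z(\alpha_i)=1$, observes that $\sum_v a(w,v)x_v$ is precisely the degree-$0$ part (with respect to $z$) of the product $\prod_j(s_{i_j}+x_{i_j})$ from Theorem~\ref{cwv}, notes that this degree-$0$ part is simply $\prod_j s_{i_j}=w$, and then invokes the standard nil-Hecke identity expressing $w$ in the basis $\{x_v\}$ with coefficients given by localizations of Schubert classes. You instead pass through Theorem~\ref{ct}, extract the top-degree part by the homogeneity of the Bott--Samelson constants (correctly identifying $K'=[m]$ as the only contribution), recognize $p^{\bfi,[m]}_{K,[m]}=\sigma^T_K|_{\pi_{[m]}}$ via the fact that $\sigma^T_{[m]}$ is the class of the top fixed point, and then apply the pullback formula in Theorem~\ref{mu}(1) together with $\mu_\bfi(\pi_{[m]})=w$. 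The one ``delicate point'' you flag --- that $\sigma^T_{[m]}=[\pi_{[m]}]_T$ on the nose --- does hold: $\sigma^T_K|_{\pi_L}=0$ unless $K\subset L$, so $\sigma^T_{[m]}$ is supported at $\pi_{[m]}$, and its restriction there is the Euler class of $T_{\pi_{[m]}}\Gamma_\bfi$, matching $[\pi_{[m]}]_T$. Your approach is more roundabout but gives a geometric interpretation of the top-degree coefficients directly in terms of Bott--Samelson restriction; the paper's is a two-line degree count. Finally, note that your derivation lands on $a(w,v)=\sigma^T_v|_w$ (restriction of $\sigma^v$ to the fixed point $w$), which is nonzero only for $v\leq w$ and homogeneous of degree $\ell(v)$ --- consistent with the grading of $a(w,v)$ --- and the identity quoted in the paper's proof is likewise $w=\sum_v\sigma^v(w)x_v$; so the wording ``localization of $\sigma^w$ at $v$'' in the statement of Theorem~\ref{top} appears to have $w$ and $v$ transposed, and your version is the correct one.
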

$\sigma^w(v)$ has a representation meaning by Kumar \cite{Kum95} and it has a positive formula given by Billey \cite{Bil99}. It would be interesting to extend Kumar's work \cite{Kum95} to find a reperesentational meaning of $c_T(w;v)$.\\

{\it Proof of Theorem \ref{top}.} Introduce the $\mathbb{Z}$-degree $z$ on $\mathbb{A}$ by $z(x_i)=-1$ and $z(\alpha_i)=1$. Note that the degree map $z$ is well-defined since the equation in Lemma \ref{xi} is homogeneous with respect to the degree map $z$. Then the degree $0$ component of $\sum_v c_T(w,v) x_v $ with respect to $z$ is $\sum_v a(w,v) x_v$. On the other hand, for $w=s_{i_1}\ldots s_{i_m}$, the sum $\sum_v a(w,v) x_v$ is the degree $0$ part of the product $\prod_{i=1}^m (s_{i_j}+x_{i_j})$ where $w=s_{i_1}\ldots s_{i_m}$ by Theorem \ref{cwv}. Since $z(s_i)=0$ and $z(x_i)=-1$, the degree $0$ part is $\prod_{i=1}^m s_{i_j}=w$ in $\mathbb{A}$. Since it is well-known that $w= \sum_v \sigma^w(v) x_v$ (See \cite{Kum02} for example), we are done.\qed

\section*{acknowledgment}
I thank Leonardo Mihalcea for helpful discussions.

\end{document}